\documentclass[12pt]{amsart}
\usepackage{latexsym}

\addtolength{\headheight}{1.14998pt}

\addtolength{\oddsidemargin}{-.5cm}
\addtolength{\evensidemargin}{-1.5cm}
\addtolength{\textwidth}{2cm}
\usepackage{amsmath}

\newcommand{\ignore}[1]{}

\newcommand{\hide}[1]{}




\DeclareMathOperator{\gexp}{\mathcal{E}}
\DeclareMathOperator{\Li}{Li}

\newcommand{\F}{\mathbb F}

\newcommand{\Z}[0]{\mathbb Z}

\newcommand{\Q}{\mathbb{Q}}


\newtheorem{dummy}{Dummy}


\newtheorem{lemma}[dummy]{Lemma}

\newtheorem{theorem}[dummy]{Theorem}

\theoremstyle{definition}

\theoremstyle{remark}

\newtheorem*{rem*}{Remark to ourselves}

\hyphenation{co-boun-da-ries}


\begin{document}

\bibliographystyle{amsalpha}

\author{Marina Avitabile}
\email{marina.avitabile@unimib.it}
\address{Dipartimento di Matematica e Applicazioni\\
  Universit\`a degli Studi di Milano - Bicocca\\
 via Cozzi 55\\
  I-20125 Milano\\
  Italy}
\author{Sandro Mattarei}
\email{smattarei@lincoln.ac.uk}
\address{Charlotte Scott Centre for Algebra\\
University of Lincoln \\
Brayford Pool
Lincoln, LN6 7TS\\
United Kingdom}

\title[Generalized finite polylogarithms]{Generalized finite polylogarithms}

\subjclass[2010]{Primary 33E50; secondary 11G55, 39B52, 33C45}
\keywords{finite polylogarithm; Laguerre polynomial; functional equation}

\begin{abstract}
We introduce a generalization $\pounds_{d}^{(\alpha)}(X)$ of the finite polylogarithms
$\pounds_{d}^{(0)}(X)=\pounds_d(X)=\sum_{k=1}^{p-1}X^k/k^d$,  in characteristic $p$,
which depends on a parameter $\alpha$.
The special case $\pounds_{1}^{(\alpha)}(X)$ was previously investigated by the authors
as the inverse, in an appropriate sense, of a parametrized generalization of the truncated exponential
which is instrumental in a {\em grading switching} technique for non-associative algebras.
Here we extend such generalization to $\pounds_{d}^{(\alpha)}(X)$ in a natural manner,
and study some properties satisfied by those polynomials.
In particular, we find how the polynomials $\pounds_{d}^{(\alpha)}(X)$ are related to the powers of $\pounds_{1}^{(\alpha)}(X)$
and derive some consequences.
\end{abstract}

\maketitle

\section{Introduction}\label{sec:intro}

In current terminology and notation introduced in~\cite{EVG:polyanalogsI}, the \emph{finite polylogarithms} are the polynomials
$\pounds_d(X)=\sum_{k=1}^{p-1}X^k/k^d$, where $d$ is an integer,
conveniently and most interestingly viewed in prime characteristic $p$.
Although those polynomials, which are truncated versions of the series defining the classical polylogarithms,
were already introduced by Mirimanoff~\cite{Mirimanoff}
in his investigations on Fermat's Last Theorem, see~\cite[Lecture~VIII]{Ribenboim:13},
they have enjoyed renewed interest in recent years due to their connections with algebraic $K$-theory.

In this paper we introduce a parametrized generalization of the finite polylogarithms.
Our motivation stems from the occurrence of the special case $d=1$
as an appropriate compositional inverse of generalized exponentials expressed by certain Laguerre polynomials.
Those particular Laguerre polynomials were investigated by the authors in~\cite{AviMat:Laguerre}
as they play the role of generalized
exponentials in a \emph{grading switching} technique for modular, non-associative algebras, whose purpose is to
produce a new grading of an algebra from a given one.
We limit ourselves here to giving the definition and exponential-like property of those Laguerre polynomials,
referring the interested reader to a sketch of their role in grading switching in the Introduction of~\cite{AviMat:glog},
and full details of that application in~\cite{AviMat:Laguerre} and~\cite{AviMat:gradings}.

The Laguerre polynomials of interest here, regarded  as having coefficients
in the field $\F_{p}$ with $p$ elements, take the form
\[
L_{p-1}^{(\alpha)}(X)=(1-\alpha^{p-1})\sum_{k=0}^{p-1}
\frac{X^k}{(1+\alpha)(2+\alpha)\cdots (k+\alpha)} \in\F_p[\alpha,X],
\]
which specializes to the \emph{truncated exponential} $E(X)=\sum_{k=0}^{p-1}X^k/k!$ when we set
$\alpha=0$.
Their crucial property for the grading switching application is that they satisfy a congruence
which is a weak version of the fundamental functional equation
$\exp(X)\exp(Y)= \exp(X+Y)$ for the classical exponential series $\exp(X)=\sum_{k=0}^{\infty}X^{k}/k!$ in characteristic zero.
Roughly speaking, the congruence relates the product $L_{p-1}^{(\alpha)}(X)L_{p-1}^{(\beta)}(Y)$ with $L_{p-1}^{(\alpha+\beta)}(X+Y)$, the latter multiplied by a
polynomial in $\F_{p}(\alpha,\beta)[X,Y]$ whose most important feature in this context is
that all its terms have total degree multiple of $p$.
We quote that result from~\cite{AviMat:Laguerre} in Theorem~\ref{thm:functional-Laguerre-simplified},
and then supplement it with a more precise version, Theorem~\ref{thm:functional-genexp},
where we provide explicit expressions for the coefficients of that polynomial.
In order to provide a solid motivation for the particular generalization of finite polylogarithms that we intend to study here,
which is inferred from the special case $d=1$,
we devote the remainder of Section~\ref{sec:gexp} to proving that the exponential-like property described by Theorem~\ref{thm:functional-Laguerre-simplified}
essentially characterizes the Laguerre polynomials under consideration.
We formalize our conclusion in Theorem~\ref{thm:equivalence}.

Thinking of $L_{p-1}^{(\alpha)}(X)$ as an exponential-like polynomial suggests that an appropriate
compositional inverse $\pounds_{1}^{(\alpha)}(X)$ of
$L_{p-1}^{(\alpha)}(X)$ may be interpreted as a logarithm-like polynomial.
Such inverse was investigated in the paper~\cite{AviMat:glog}, where it was denoted by $G^{(\alpha)}(X)$.
However, to match the standard notation $\pounds_{1}(X)$ for the first finite polylogarithm
we set here $\pounds_{1}^{(\alpha)}(X)=-G^{(\alpha)}(X)$.
The precise statement for $\pounds_{1}^{(\alpha)}(X)$ being (essentially) a left compositional inverse of $L_{p-1}^{(\alpha)}(X)$ then reads as
$\pounds_{1}^{(\alpha)}(X)$ being the unique polynomial of degree less than $p$ in $\F_{p}(\alpha)[X]$ such that
\[
-\pounds_{1}^{(\alpha)}\bigl(L_{p-1}^{(\alpha)}(X)\bigr)\equiv X \pmod{X^p-(\alpha^p-\alpha)}.
\]

Before we give, in the next paragraph, an explicit description of the coefficients of $\pounds_{1}^{(\alpha)}(X)$,
we wish to further stress that the above congruence is really what motivates its definition
as a logarithm-like polynomial, as (essentially) the left inverse of the exponential-like polynomial $L_{p-1}^{(\alpha)}(X)$
(and also a right inverse with respect to an appropriate different modulus).
In turn, the exponential-like property of $L_{p-1}^{(\alpha)}(X)$ determines that polynomial uniquely up to natural variations,
as we mentioned above.
Finally, the modulus of the above congruence is also natural and forced upon us by the application to grading switching.
Altogether, this constitutes a strong support for this particular generalization of
$\pounds_{1}(X)=\pounds_{1}^{(0)}(X)$ that we consider here.
Setting $\alpha=0$
the above congruence becomes
$
-\pounds_{1}\bigl(E(X)\bigr)\equiv X \pmod{X^p},
$
which according to the functional equation $\pounds_1(1-X)=\pounds_1(X)$ (as polynomials in $\F_p[X]$)
results from $\log\bigl(\exp(X)\bigr)=X$ upon viewing it first modulo $X^p$ and then modulo $p$.
The details of this deduction are explained in the discussion following~\cite[Theorem~2]{AviMat:glog}.

It turns out that the coefficients of $\pounds_{1}^{(\alpha)}(X)$ can be explicitly described as follows.
For integers $0<k<p$ and $0<a<p$, we let
$p^{e(k,a)}$ be the highest power of $p$ which divides the product of binomial coefficients
$\prod_{s=1}^{k}\binom{sa}{a}$,
and set
$g_k(\alpha)=\prod_{0<a<p}(1+\alpha/a)^{-e(k,a)}$,
viewed as a rational function in $\F_p(\alpha)$.
Then $\pounds_1^{(\alpha)}(X)=\sum_{k=1}^{p-1} g_k(\alpha)\,X^k/k$.
This description of the coefficients $g_k(\alpha)$ of $\pounds_1^{(\alpha)}(X)$
is more compact than the original one we gave in~\cite[Subsection~2.2]{AviMat:glog}.
Most of the work to bring that description to the fully factorized and arguably more useful form given here was actually done in~\cite[Section~4]{AviMat:glog},
with a short supplementary argument which we provide in Subsection~\ref{subsec:gfp} of this paper.

To extend this generalization of $\pounds_1(X)$ to higher finite polylogarithms we note that
the various finite polylogarithms are connected one another by an application of the differential operator $X\,d/dX$.
If this rule is to be preserved in the generalization, it is natural to set
$\pounds_d^{(\alpha)}(X)=\sum_{k=1}^{p-1} g_k(\alpha)\,X^k/k^d$
for any integer $d$.
Of course $\pounds_{d+p-1}^{(\alpha)}(X)=\pounds_d^{(\alpha)}(X)$.
These polynomials in $\F_{p}(\alpha)[X]$, which generalize $\pounds_d(X)=\pounds_d^{(0)}(X)$, are the objects of interest in the remainder of the paper.

Functional equations for finite polylogarithms are of considerable interest, and we review some in Subsection~\ref{subsec:polylog}.
Some of them relate to a congruence which connects finite polylogarithms $\pounds_{d}(X)$ with powers of $\pounds_{1}(X)$, namely,
\[
\pounds_1(X)^d\equiv (-1)^{d-1} d!\, \pounds_d(1-X) \pmod {X^p},
\]
for $0<d<p-1$, which is Equation~\ref{eq:pounds_powers} below.
Our main result here is Theorem~\ref{thm:powers}, which gives an extension of this congruence
to our generalized finite polylogarithms $\pounds_d^{(\alpha)}(X)$.
In the generalized version of the congruence (which in our formulation rather extends the above after $X$ is substituted with $1-X$)
the right-hand side does not involve just $\pounds_d^{(\alpha)}(X)$ but is a linear combination of that and each lower one down to $\pounds_1^{(\alpha)}(X)$.
Finally, we deduce a couple of consequences from Theorem~\ref{thm:powers},
whose relevance we explain in Subsection~\ref{subsec:equations}.
In particular, our final result, Theorem~\ref{thm:GP}, gives an equation which expresses the finite polylogarithm $\pounds_d(X)$
as a linear combination of certain evaluations of all generalized finite polylogarithms $\pounds_{d}^{(r\alpha)}$
as $r$ varies from $1$ to $p-1$.
We collect all substantial proofs of our results on the generalized finite polylogarithms in the final Section~\ref{sec:proofs}.

\section{A generalized truncated exponential}\label{sec:gexp}

The classical (generalized) Laguerre polynomial of degree $n \geq 0$ is defined as
\[
L_n^{(\alpha)}(X)=\sum_{k=0}^n\binom{\alpha+n}{n-k}
\frac{(-X)^k}{k!},
\]
where $\alpha$ is a parameter, usually taken in the complex numbers.
However, we may also view $L_n^{(\alpha)}(X)$ as a polynomial with rational coefficients in the two
indeterminates $\alpha$ and $X$, hence in the polynomial ring $\Q[\alpha,X]$.

Having fixed a prime $p$, we are only interested in Laguerre polynomials of degree $n=p-1$, whose coefficients
are $p$-integral and can be viewed modulo $p$.
Throughout the paper we work directly in characteristic $p$ rather than over the rationals,
thus regarding $L_{p-1}^{(\alpha)}(X)$ as a polynomial in $\F_{p}[\alpha,X]$.
The explicit form for  $L_{p-1}^{(\alpha)}(X)$  mentioned in the introduction easily follows from the classical definition taking into
account the identities $k!(p-1-k)!=(-1)^{k-1}$ for $0\leq k<p$ and
$\alpha^{p-1}-1=\prod_{k=1}^{p-1}(\alpha+k)$ in $\F_{p}[\alpha]$. We quote from \cite{AviMat:Laguerre}
a congruence which we will use later
\begin{equation}\label{eq:L-diff}
X\frac{d}{dX}
L_{p-1}^{(\alpha)}(X)
\equiv
(X-\alpha)\cdot L_{p-1}^{(\alpha)}(X)
\pmod
{X^p-(\alpha^p-\alpha)},
\end{equation}
and that may be thought of as an analogue of the differential equation
$\exp^{\prime}(X)=\exp(X)$ for the  classical exponential series.
The differential equation for the polynomials $L_{p-1}^{(\alpha)}(X)$ stated in Equation~\ref{eq:L-diff} was used in~\cite{AviMat:Laguerre}
to prove the following analogue of the functional equation $\exp(X)\exp(Y)=\exp(X+Y)$.

\begin{theorem}[{\cite[Proposition~2]{AviMat:Laguerre}}]\label{thm:functional-Laguerre-simplified}
Let $\alpha,\beta,X,Y$ be indeterminates over $\F_p$.
There exist rational expressions $c_i(\alpha,\beta)\in\F_p(\alpha,\beta)$
such that
\[
L_{p-1}^{(\alpha)}(X)\cdot
L_{p-1}^{(\beta)}(Y)
\equiv
L_{p-1}^{(\alpha+\beta)}(X+Y)\cdot
\biggl(
c_0(\alpha,\beta)+\sum_{i=1}^{p-1}c_i(\alpha,\beta)X^iY^{p-i}
\biggr)
\]
in $\F_p(\alpha,\beta)[X,Y]$, modulo the ideal generated by
$X^p-(\alpha^p-\alpha)$
and
$Y^p-(\beta^p-\beta)$.
\end{theorem}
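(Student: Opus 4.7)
The plan is to work inside the quotient ring $R:=\F_p(\alpha,\beta)[X,Y]/(X^p-(\alpha^p-\alpha),\,Y^p-(\beta^p-\beta))$ and to reduce the theorem to a logarithmic-derivative computation built on Equation~(\ref{eq:L-diff}). The initial observation is the identity
\[
(X+Y)^p-\bigl((\alpha+\beta)^p-(\alpha+\beta)\bigr)=\bigl(X^p-(\alpha^p-\alpha)\bigr)+\bigl(Y^p-(\beta^p-\beta)\bigr),
\]
whose left-hand side therefore lies in the defining ideal of $R$; hence applying Equation~(\ref{eq:L-diff}) to $L_{p-1}^{(\alpha+\beta)}(Z)$ and specializing at $Z=X+Y$ produces, in $R$,
\[
(X+Y)\,\partial_X L_{p-1}^{(\alpha+\beta)}(X+Y)=(X+Y-\alpha-\beta)\,L_{p-1}^{(\alpha+\beta)}(X+Y),
\]
and likewise with $\partial_Y$. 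I would also check that $\partial_X$ and $\partial_Y$ descend to derivations of $R$ (each kills the generators of the defining ideal in characteristic~$p$), and that $R$ is a field of degree $p^2$ over $F:=\F_p(\alpha,\beta)$, obtained by two successive degree-$p$ inseparable extensions whose irreducibility follows from the fact that $\alpha$ does not lie in the $p$-th-power subfield of $F$, and then $\beta$ does not lie in the $p$-th-power subfield of the intermediate field. Every nonzero element of $R$ is therefore a unit; in particular $X$, $Y$, $X+Y$, and the three Laguerre factors (each with nonzero constant term in $F$, namely $1-\alpha^{p-1}$, $1-\beta^{p-1}$, $1-(\alpha+\beta)^{p-1}$) are all invertible.

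This places the quotient
\[
Q:=\frac{L_{p-1}^{(\alpha)}(X)\,L_{p-1}^{(\beta)}(Y)}{L_{p-1}^{(\alpha+\beta)}(X+Y)}\in R
\]
on firm ground. A direct logarithmic-derivative calculation, applying Equation~(\ref{eq:L-diff}) to each of the three Laguerre factors and collecting over the common denominator $X+Y$, produces
\[
\frac{X\,\partial_X Q}{Q}=\frac{\beta X-\alpha Y}{X+Y},\qquad
\frac{Y\,\partial_Y Q}{Q}=\frac{\alpha Y-\beta X}{X+Y},
\]
whose sum vanishes. Hence the Euler derivation $E:=X\,\partial_X+Y\,\partial_Y$ annihilates $Q$ in $R$.

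To conclude, I expand $Q=\sum_{0\le i,j<p}c_{ij}(\alpha,\beta)\,X^iY^j$ on the natural $F$-basis of $R$ consisting of the monomials of degree less than $p$ in each variable. Since $E$ acts on this basis by $E(X^iY^j)=(i+j)X^iY^j$, the identity $EQ=0$ forces $c_{ij}=0$ whenever $p\nmid i+j$; under the constraint $0\le i,j\le p-1$ the only surviving index pairs are $(0,0)$ and $(i,p-i)$ with $1\le i\le p-1$. Multiplying the defining relation $L_{p-1}^{(\alpha)}(X)\,L_{p-1}^{(\beta)}(Y)=L_{p-1}^{(\alpha+\beta)}(X+Y)\cdot Q$ out then yields exactly the form claimed in the theorem, with $c_0:=c_{00}$ and $c_i:=c_{i,p-i}$. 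The bulk of the real work is concentrated in the opening paragraph: confirming that the derivations descend, that $R$ is an integral domain so the quotient makes sense, and that every denominator appearing in the log-derivative computation is a unit of $R$. Once these preliminaries are settled, the Euler-operator argument on the finite-dimensional $F$-vector space $R$ is almost automatic, and the support of $Q$ is forced to be exactly the set described in the theorem.
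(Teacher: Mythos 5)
Your argument is correct and complete: the preliminary checks all go through (the derivations $\partial_X,\partial_Y$ kill both generators of the ideal and hence descend; $X^p-(\alpha^p-\alpha)$ and then $Y^p-(\beta^p-\beta)$ are irreducible because $\alpha\notin F^p$ and $\beta\notin F_1^p$, so $R$ is a field and the three Laguerre factors, with nonzero constant terms $1-\alpha^{p-1}$, $1-\beta^{p-1}$, $1-(\alpha+\beta)^{p-1}$, as well as $X+Y$, are units), and the Euler-operator computation $EQ=0$ correctly forces the support of $Q$ onto the index pairs $(0,0)$ and $(i,p-i)$. The paper does not reprove this statement but quotes it from \cite{AviMat:Laguerre}, noting explicitly that the proof there rests on the differential equation~\eqref{eq:L-diff}; since that congruence is precisely the engine of your logarithmic-derivative argument, your route is essentially the intended one.
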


The actual statement of Proposition~2 in~\cite{AviMat:Laguerre} is stronger and more involved than Theorem~\ref{thm:functional-Laguerre-simplified},
as it had to provide a sharper control over the rational expressions $c_i(\alpha,\beta)$, which was required for an application to grading switching.
The expressions $c_i(\alpha,\beta)$ are actually uniquely determined, and are given by
$
c_0(\alpha,\beta)
=
-(\alpha-1)_{p-1}(\beta-1)_{p-1}/(\alpha+\beta-1)_{p-1},
$
and
$
c_i(\alpha,\beta)
=
-(\alpha-1)_{p-1-i}(\beta-1)_{i-1}/(\alpha+\beta-1)_{p-1}
$
for $0<i<p$.
These explicit formulas were omitted from~\cite{AviMat:Laguerre} as their available proof was awkward,
but they will now follow from Theorem~\ref{thm:functional-genexp} below.

A simplification in those formulas and their proof results from a natural normalization of our Laguerre polynomials
to turn their constant term into $1$:
\[
\gexp^{(\alpha)}(X)
:=\frac{L_{p-1}^{(\alpha)}(X)}{1-\alpha^{p-1}}
=\sum_{k=0}^{p-1}
\frac{X^k}{(1+\alpha)(2+\alpha)\cdots (k+\alpha)} \in\F_p(\alpha)[X].
\]
While $L_{p-1}^{(\alpha)}(X)$ has the advantage of having polynomial coefficients in $\alpha$,
which was a mild simplification in its application to grading switching in~\cite{AviMat:Laguerre},
the polynomial $\gexp^{(\alpha)}(X)$ seems a more natural analogue of the exponential function.
We now prove a more precise version of Theorem~\ref{thm:functional-Laguerre-simplified} in terms of $\gexp^{(\alpha)}(X)$,
where the coefficients are given explicitly.

\begin{theorem}\label{thm:functional-genexp}
Let $\alpha,\beta,X,Y$ be indeterminates over $\F_p$.
Then
\[
\gexp^{(\alpha)}(X)\cdot
\gexp^{(\beta)}(Y)
\equiv
\gexp^{(\alpha+\beta)}(X+Y)\cdot
\biggl(
1+\sum_{i=1}^{p-1}\frac{X^iY^{p-i}}
{(\alpha+i)_{i}\,(\beta+p-i)_{p-i}}
\biggr)
\]
in $\F_p(\alpha,\beta)[X,Y]$, modulo the ideal generated by
$X^p-(\alpha^p-\alpha)$
and
$Y^p-(\beta^p-\beta)$.
\end{theorem}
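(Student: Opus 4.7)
The plan is to combine Theorem~\ref{thm:functional-Laguerre-simplified}, which yields the existence of a factor of the required shape, with the differential congruence~\ref{eq:L-diff}, in order to pin down the coefficients $c_i(\alpha,\beta)$ explicitly. After dividing~\ref{eq:L-diff} by $1-\alpha^{p-1}$, it reads $X\,(d/dX)\gexp^{(\alpha)}(X)\equiv(X-\alpha)\gexp^{(\alpha)}(X)$ modulo $X^p-(\alpha^p-\alpha)$. Write $F(X,Y)=\gexp^{(\alpha)}(X)\gexp^{(\beta)}(Y)$, $G(X,Y)=\gexp^{(\alpha+\beta)}(X+Y)$, and let $I\subset\F_p(\alpha,\beta)[X,Y]$ denote the ideal of the congruence. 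Theorem~\ref{thm:functional-Laguerre-simplified} gives $F\equiv GP\pmod I$ with $P=c_0+\sum_{i=1}^{p-1}c_iX^iY^{p-i}\in\F_p(\alpha,\beta)[X,Y]$; setting $X=Y=0$ forces $c_0=1$. Moreover $P$ is uniquely determined, because $R:=\F_p(\alpha,\beta)[X,Y]/I$ is a field — neither $\alpha^p-\alpha$ nor $\beta^p-\beta$ is a $p$-th power in the relevant base — in which $G$, having constant term $1$, is a unit.

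The differential congruences give $XF_X\equiv(X-\alpha)F$ and $YF_Y\equiv(Y-\beta)F$ modulo $I$. Since $(X+Y)^p\equiv X^p+Y^p\equiv(\alpha+\beta)^p-(\alpha+\beta)\pmod I$, \ref{eq:L-diff} applies also to $G$, yielding $(X+Y)G_X\equiv(X+Y-\alpha-\beta)G\pmod I$ (and similarly for $G_Y$). Substituting $F=GP$ into the equation for $F_X$, multiplying through by $X+Y$, and using the equation for $G_X$, a short algebraic simplification (the bracketed expression collapses to $\beta X-\alpha Y$) produces
\[
X(X+Y)\,P_X\equiv(\beta X-\alpha Y)\,P\pmod I,
\]
together with its $Y$-analogue $Y(X+Y)P_Y\equiv(\alpha Y-\beta X)P\pmod I$.

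Expanding both sides of the displayed PDE in the monomial basis $\{X^aY^b:0\leq a,b<p\}$ of $R$, using $X^p\equiv\alpha^p-\alpha$ and $Y^p\equiv\beta^p-\beta$, and comparing coefficients of $X^jY^{p+1-j}$ for $2\leq j\leq p-1$ yields the recurrence $(\alpha+j)\,c_j=(\beta-j+1)\,c_{j-1}$. The coefficients of $X$ and of $Y$, produced by the wrap-around reductions $XY^p\equiv(\beta^p-\beta)X$ and $X^pY\equiv(\alpha^p-\alpha)Y$, give the boundary relations $(\alpha+1)(\beta^p-\beta)\,c_1=\beta$ and $(\beta+1)(\alpha^p-\alpha)\,c_{p-1}=\alpha$. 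Finally, one verifies that the claimed closed form $c_j=1/\bigl((\alpha+j)_j(\beta+p-j)_{p-j}\bigr)$ — with $(x)_k$ denoting the falling factorial $x(x-1)\cdots(x-k+1)$, so that the two denominators equal $\prod_{m=1}^j(\alpha+m)$ and $\prod_{m=1}^{p-j}(\beta+m)$ in $\F_p[\alpha,\beta]$ — satisfies the recurrence (immediate from $c_j/c_{j-1}=(\beta+p-j+1)/(\alpha+j)=(\beta-j+1)/(\alpha+j)$) and both boundary relations (via the identity $\prod_{k=1}^{p-1}(\beta+k)=\beta^{p-1}-1$ in $\F_p[\beta]$, which reduces $c_1$ and $c_{p-1}$ to $1/((\alpha+1)(\beta^{p-1}-1))$ and $1/((\alpha^{p-1}-1)(\beta+1))$ respectively).

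The main obstacle will be the careful bookkeeping in the expansion modulo $I$: the boundary coefficients of $X$ and $Y$ arise only through the wrap-around reductions of $X^p$ and $Y^p$, and signs and indices must be tracked accurately so that the internal recurrence together with both boundary relations are all consistently satisfied by the same closed form.
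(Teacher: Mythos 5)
Your proposal is correct, but it reaches the coefficients by a genuinely different route from the paper. The paper also starts from Theorem~\ref{thm:functional-Laguerre-simplified} and gets $s_0=1$ by comparing constant terms, but then it simply compares the coefficients of $X^k$ (i.e.\ of $X^kY^0$ in the reduced forms) on both sides for $0<k<p$: the left side gives $1/(\alpha+k)_k$, the right side gives $1/(\alpha+\beta+k)_k$ plus a binomial sum in the $s_i$ coming from the single wrap-around $Y^p\equiv\beta^p-\beta$, and the Chu--Vandermonde identity $(\alpha+\beta+k)_k=\sum_i\binom{k}{i}(\alpha+k)_{k-i}(\beta)_i$ turns the resulting triangular system into the closed form at once. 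You instead differentiate: you transport Equation~\eqref{eq:L-diff} to $F$, $G$ and derive the first-order equation $X(X+Y)P_X\equiv(\beta X-\alpha Y)P$, then solve the two-term recurrence $(\alpha+j)c_j=(\beta-j+1)c_{j-1}$ with the boundary relation $(\alpha+1)(\beta^p-\beta)c_1=\beta$. I have checked the coefficient extraction (including the two wrap-around terms $X^pY$ and $XY^p$) and the verification of the closed form against the recurrence and both boundary relations; everything is consistent, and since the first boundary relation pins down $c_1$ and the recurrence propagates, the system has a unique solution, so your argument is complete. This differential method is in fact the one the paper deploys later, in the proof of Theorem~\ref{thm:equivalence}; your version buys a conceptual explanation of the denominators (they are forced by a Pochhammer-type recurrence), while the paper's direct comparison is shorter and avoids justifying that $G$, $X$ and $X+Y$ are invertible in the quotient. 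Two small points to tighten: (i) ``setting $X=Y=0$'' is not literally legitimate, since the generators of the ideal do not vanish there; what you mean (and what suffices) is to compare the coefficients of $X^0Y^0$ in the normal forms with respect to the basis $\{X^aY^b:0\le a,b<p\}$, noting that on the right only $c_0$ contributes a monomial with both exponents divisible by $p$; (ii) the field claim for $R$ needs $\beta^p-\beta$ not to be a $p$th power in the intermediate field $\F_p(\alpha,\beta)[X]/(X^p-(\alpha^p-\alpha))$, whose subfield of $p$th powers is $\F_p(\alpha,\beta^p)$ --- this is true, but it is the ``relevant base'' you should name. Neither point affects the validity of the proof.
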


\begin{proof}
We know from Theorem~\ref{thm:functional-Laguerre-simplified} that there exist rational expressions $s_i(\alpha,\beta)\in\F_p(\alpha,\beta)$ such that
\[
\gexp^{(\alpha)}(X)\cdot
\gexp^{(\beta)}(Y)
\equiv
\gexp^{(\alpha+\beta)}(X+Y)\cdot
\biggl(
s_0(\alpha,\beta)+\sum_{i=1}^{p-1}s_i(\alpha,\beta)X^iY^{p-i}
\biggr)
\]
in $\F_p(\alpha,\beta)[X,Y]$, modulo the ideal generated by
$X^p-(\alpha^p-\alpha)$
and
$Y^p-(\beta^p-\beta)$.

It will turn out that the expressions $s_i(\alpha,\beta)$ are actually uniquely determined,
and we will compute them by comparing coefficients of certain monomials in both sides of the above congruence, after reduction by the moduli.
First, the only term in the product at the right-hand side of the congruence
in which both exponents of $X$ and $Y$ are multiples of $p$ is
$s_0(\alpha,\beta)$,
hence comparing constant terms in both sides of the congruence we find
$s_0(\alpha,\beta)=1$.

Now compare the coefficients of $X^k$ in both sides of the congruence for $0<k<p$.
In the left-hand side that coefficient equals $1/(\alpha+k)_k$.
In the right-hand side, after reducing modulo $Y^p-(\beta^p-\beta)$ the coefficient of $X^k$ equals
\[
\frac{1}{(\alpha+\beta+k)_k}
+\frac{\beta^p-\beta}{(\alpha+\beta+k)_{k}}
\sum_{i=1}^k\binom{k}{i}
s_i(\alpha,\beta).
\]
Consequently, we find
\[
(\beta^p-\beta)\sum_{i=1}^k\binom{k}{i}
s_i(\alpha,\beta)
=
\frac{(\alpha+\beta+k)_k}{(\alpha+k)_k}-1
=
\frac{1}{(\alpha+k)_k}
\sum_{i=1}^k\binom{k}{i}
(\alpha+k)_{k-i}(\beta)_i,
\]
where we have applied the {\em binomial theorem} for falling factorials, and hence
\[
\sum_{i=1}^k\binom{k}{i}
s_i(\alpha,\beta)
=
\sum_{i=1}^k\binom{k}{i}
\frac{1}{(\alpha+i)_i(\beta+p-i)_{p-i}}.
\]
This yields
\[
s_i(\alpha,\beta)
=\frac{1}{(\alpha+i)_{i}(\beta+p-i)_{p-i}}
\]
for $0<i<p$, as desired.
\end{proof}

The special case of Theorem~\ref{thm:functional-genexp} where $\alpha=\beta=0$ concerns the truncated exponential $\gexp^{(0)}(X)=E(X)$
and is~\cite[Proposition~1]{AviMat:gradings}, noting that
$(i)_{i}\,(p-i)_{p-i}=i!(p-i)!\equiv(-1)^i i\pmod{p}$.

As we mentioned in Section~\ref{sec:intro}, the existence of a congruence as in Theorem~\ref{thm:functional-Laguerre-simplified},
for some unspecified rational expressions $c_i(\alpha,\beta)$,
suffices to characterize the polynomials $L_{p-1}^{(\alpha)}(X)$ among the polynomials in $\F_{p}[\alpha][X]$, up to some natural variations.
For convenience, we rather state and prove an essentially equivalent characterization of their scalar multiples $\mathcal{E}^{(\alpha)}(X)$,
among the polynomials in $\F_{p}(\alpha)[X]$, again up to some natural variations.

\begin{theorem}\label{thm:equivalence}
Let $\alpha, \beta, X, Y$ be indeterminates over $\F_{p}$ and let $P^{(\alpha)}(X)$ be a nonzero polynomial in $\F_{p}(\alpha)[X]$,
of degree less than $p$.
Suppose that there exist rational expressions $s_i(\alpha,\beta) \in \F_{p}(\alpha,\beta)$ such that
\begin{equation}\label{eq:P^aP^b}
P^{(\alpha)}(X)\cdot
P^{(\beta)}(Y)
\equiv
P^{(\alpha+\beta)}(X+Y)\cdot
\Bigl(
1+\sum_{i=1}^{p-1}s_i(\alpha,\beta)X^iY^{p-i}
\Bigr)
\end{equation}
in $\F_{p}(\alpha,\beta)[X,Y]$ modulo the ideal generated by
$X^p-(\alpha^p-\alpha)$
and
$Y^p-(\beta^p-\beta)$.

Assume that none of the denominators of the expressions $s_i(\alpha,\beta)$ has $\beta$ as a factor, so $s_i(\alpha,0)$ are defined.
Assume also that $0$ is not a pole of $s_{p-1}(\alpha,0)$, nor of any coefficient of $P^{(\alpha)}(X)$,
so $s_{p-1}(0,0)$ and $P^{(0)}(X)$ are defined.

Then
$P^{(\alpha)}(X)=\gexp^{(c\alpha)}(cX)$ for some $c \in \F_{p}$.
\end{theorem}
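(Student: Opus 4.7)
The plan is to specialize the functional equation at $\beta = 0$ to extract a differential relation for $P^{(\alpha)}(X)$ parallel to~\eqref{eq:L-diff}, solve the resulting coefficient recurrence, and use the regularity hypotheses to eliminate spurious degrees of freedom. Setting $\beta = 0$ in~\eqref{eq:P^aP^b} is legitimate by the standing hypothesis on the $s_i(\alpha,\beta)$ and reduces the modulus on $Y$ to $Y^p$. Write $P^{(\alpha)}(X) = \sum_{k=0}^{p-1} a_k(\alpha)\,X^k$ and set $c := a_1(0)$ and $\sigma(\alpha) := s_{p-1}(\alpha, 0)$; both exist by hypothesis. Comparing coefficients of $Y^1$ on both sides of the specialized congruence -- on the right the only contributions are $[Y^1]P^{(\alpha)}(X+Y)$, which equals the $X$-derivative of $P^{(\alpha)}(X)$, and $s_{p-1}(\alpha,0)\,X^{p-1}\cdot P^{(\alpha)}(X)$ -- yields
\[
\frac{d}{dX}P^{(\alpha)}(X) \equiv \bigl(c - \sigma(\alpha)\,X^{p-1}\bigr)\,P^{(\alpha)}(X) \pmod{X^p - (\alpha^p - \alpha)}.
\]
Multiplying by $X$ and replacing $X^p$ by $\alpha^p - \alpha$ recasts this as a direct analogue of~\eqref{eq:L-diff}: $X\,\frac{d}{dX}P^{(\alpha)}(X) \equiv (cX - \tau(\alpha))\,P^{(\alpha)}(X)$, with $\tau(\alpha) := \sigma(\alpha)(\alpha^p - \alpha)$.

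Comparing coefficients of $X^k$ in the displayed congruence yields the recurrence $(k+1+\tau)\,a_{k+1} = c\,a_k$ for $0 \leq k \leq p-2$, solved by $a_k = a_0\,c^k/(k+\tau)_k$, and from the coefficient of $X^{p-1}$ the boundary equation $\sigma\,a_0 = c\,a_{p-1}$. Substituting the solved form into the latter and using the identity $\tau(\tau+1)\cdots(\tau+p-1) = \tau^p - \tau$ in $\F_p[\tau]$, together with $c^p = c$, collapses it to $\tau(\alpha)^p - \tau(\alpha) = c(\alpha^p - \alpha)$, i.e.\ $(\tau - c\alpha)^p = \tau - c\alpha$, so $\tau(\alpha) = c\alpha + d$ for some $d \in \F_p$. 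The assumption that $\sigma(\alpha) = \tau(\alpha)/\bigl(\alpha(\alpha^{p-1}-1)\bigr)$ is regular at $\alpha = 0$ then forces $d = 0$, whence $\tau(\alpha) = c\alpha$ and $a_k(\alpha) = a_0(\alpha)\,c^k/(k+c\alpha)_k$; equivalently, $P^{(\alpha)}(X) = a_0(\alpha)\,\gexp^{(c\alpha)}(cX)$.

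It remains to show $a_0 \equiv 1$. Comparing the $X^0Y^0$ coefficients on both sides of~\eqref{eq:P^aP^b}, and noting that no monomial of the correction factor reduces to a pure constant after reduction modulo the two moduli (vanishing of both exponents in $X^{k-j+i}Y^{p-i+j}$ would force $k-j+i = p$ and $j = i$, hence $k = p$, excluded), gives the Cauchy-type identity $a_0(\alpha)\,a_0(\beta) = a_0(\alpha+\beta)$ in $\F_p(\alpha,\beta)$. Differentiating in $\beta$ and setting $\beta = 0$ produces $a_0'(\alpha) = a_0'(0)\,a_0(\alpha)$; if $a_0'(0) \neq 0$, then $a_0'/a_0$ would be a nonzero constant, impossible because the logarithmic derivative of any nonconstant rational function exhibits simple poles. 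Hence $a_0'(0) = 0$ and $a_0 \in \F_p(\alpha^p)$, and an iterated Frobenius descent (applying the Cauchy relation to $a_0$ viewed successively as a rational function in $\alpha^{p^k}$) forces $a_0 \in \bigcap_k \F_p(\alpha^{p^k}) = \F_p$. Specializing the Cauchy equation at $\alpha = \beta = 0$ gives $a_0(0)^2 = a_0(0)$, and the case $a_0 \equiv 0$ is excluded by the nonvanishing of $P$, so $a_0 \equiv 1$. This completes the proof that $P^{(\alpha)}(X) = \gexp^{(c\alpha)}(cX)$.

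The step I expect to require the most care is the reduction of the boundary equation to the clean identity $\tau^p - \tau = c(\alpha^p - \alpha)$, which leans on both the characteristic-$p$ factorization of $T(T+1)\cdots(T+p-1)$ in $\F_p[T]$ and on $c^p = c$, followed by the removal of the additive constant $d$ via regularity of $s_{p-1}(\alpha,0)$ at $\alpha = 0$: these are the points where the specific form of the modulus $X^p - (\alpha^p - \alpha)$ becomes load-bearing.
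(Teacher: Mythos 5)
Your proof is correct and follows essentially the same route as the paper's: extracting the first-order relation $X\,dP^{(\alpha)}(X)/dX \equiv \bigl(cX-\tau(\alpha)\bigr)P^{(\alpha)}(X)$ at $\beta=0$, telescoping the coefficient recurrence to reach $(\tau-c\alpha)^p=\tau-c\alpha$, and removing the additive constant via regularity of $s_{p-1}(\alpha,0)$ at $\alpha=0$. The only divergences are minor: before dividing in the recurrence you should note explicitly that $\tau(\alpha)+k\neq 0$ for $0<k<p$ (this holds because $\tau(0)=0$ by the same regularity hypothesis, so $\tau$ cannot be a nonzero constant), and your differentiation-plus-Frobenius-descent treatment of the Cauchy equation for $a_0$ can be replaced by the paper's shorter observation that $a_0(\alpha)^p=a_0(p\alpha)=a_0(0)=1$, which forces $a_0\equiv 1$ at once.
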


To avoid obscuring the argument of the proof, we have placed various assumptions in Theorem~\ref{thm:equivalence}
on the denominators of the expressions $s_i(\alpha,\beta)$ and also of the coefficients of $P^{(\alpha)}(X)$.
In another version of this result one may take $P^{(\alpha)}(X)\in\F_p[\alpha][X]$,
hence with polynomial coefficients, rather than $P^{(\alpha)}(X)\in\F_p(\alpha)[X]$,
provided that one allows a further rational expression $s_{0}(\alpha,\beta)$ in place of the term $1$ in the right-hand side of the congruence.
Then quite similar arguments as in the proof of Theorem~\ref{thm:equivalence} show that
$P^{(\alpha)}(X)=d(\alpha)\cdot L_{p-1}^{(c\alpha)}(cX)$, for some polynomial $d(\alpha)\in\F_p[\alpha]$, and some $c \in \F_p$.

\begin{proof}
The polynomial $P^{(\alpha)}(X)$ must have a nonzero constant term $P^{(\alpha)}(0)$.
In fact, upon setting $Y=0$ and $\beta=0$, which is allowed according to our assumptions on the rational expressions $s_i(\alpha,\beta)$ and on $P^{(\alpha)}(X)$,
Equation~\eqref{eq:P^aP^b} yields
$
P^{(\alpha)}(X)\cdot
P^{(0)}(0)
\equiv
P^{(\alpha)}(X)
$
modulo $X^p-(\alpha^p-\alpha)$,
whence $P^{(0)}(0)=1$.

Because the only term in the product at the right-hand side of Equation~\eqref{eq:P^aP^b}
in which both exponents of $X$ and $Y$ are multiples of $p$ is the constant term $P^{(\alpha+\beta)}(0)$, we have
$
P^{(\alpha)}(0)\cdot
P^{(\beta)}(0)
=
P^{(\alpha+\beta)}(0)
$.
Setting $\beta=k\alpha$ we find
$
P^{(\alpha)}(0)\cdot
P^{(k\alpha)}(0)
=
P^{((k+1)\alpha)}(0)
$,
and working inductively we find
$
P^{(\alpha)}(0)^p
=
P^{(p\alpha)}(0)
=
P^{(0)}(0)
=1
$,
whence $P^{(\alpha)}(0)=1$.

Following a standard approach to functional equations such as Equation~\eqref{eq:P^aP^b} we apply the differential operator $d/dY$ to both sides.
This is allowed for the congruence because $d/dY$ annihilates both
$X^p-(\alpha^p-\alpha)$ and $Y^p-(\beta^p-\beta)$, and hence leaves invariant the ideal of $\F_{p}(\alpha,\beta)[X,Y]$ which they generate.
Multiplying the resulting congruence by $X$, and specializing $Y=0$ and $\beta=0$,
we find
\[
X\,P^{(\alpha)}(X)\cdot
c
\equiv
X\,\frac{dP^{(\alpha)}(X)}{dX}
+s_{p-1}(\alpha,0)P^{(\alpha)}(X)X^p
\pmod {X^p-(\alpha^p-\alpha)},
\]
where $c\in\F_p$ is the value of $dP^{(0)}(X)/dX$ at $X=0$.
After reducing by the modulus and rearranging terms this becomes
\[
X\,\frac{dP^{(\alpha)}(X)}{dX}
\equiv\bigl(cX-r(\alpha)\bigr)P^{(\alpha)}(X)
\pmod {X^p-(\alpha^p-\alpha)},
\]
where we have used the shorthand $r(\alpha)=(\alpha^p-\alpha)s_{p-1}(\alpha,0)$.
Note that $\alpha=0$ is a zero of $r(\alpha)$, otherwise it would be a pole of $s_{p-1}(\alpha,0)=r(\alpha)/(\alpha^p-\alpha)$,
contrary to one of our assumptions.
In particular, $r(\alpha)$ cannot be a nonzero constant.

If $c=0$ then both sides of the congruence are polynomials of degree less than $p$, hence the congruence is actually an equality,
and because $X\,dX^k/dX=kX^k$ it follows that $r(\alpha)=0$ and $P^{(\alpha)}(X)=1=\gexp^{(0\alpha)}(0X)$.

Now assume that $c\neq 0$ and write
$P^{(\alpha)}(X)=\sum_{k=0}^{p-1}c_k(\alpha)X^k$,
hence with $c_0(\alpha)=1$, and $c_1(0)=c$.
After expanding the right-hand side and replacing the term
$cX\cdot c_{p-1}(\alpha)X^{p-1}$ with
$c\cdot c_{p-1}(\alpha)\cdot(\alpha^p-\alpha)$,
the congruence becomes an equality as both sides have now degree less than $p$.
Equating term by term we find
\[
\begin{cases}
r(\alpha)=c\cdot c_{p-1}(\alpha)\cdot(\alpha^p-\alpha),
&\text{and}
\vspace{0.2cm}\\
\bigl(r(\alpha)+k\bigr)\cdot c_k(\alpha)=c\cdot c_{k-1}(\alpha)
&\text{for $1\leq k \leq p-1$}.
\end{cases}
\]
Because $r(\alpha)$ is not a nonzero constant, $r(\alpha)+k$ is never zero, and consequently none of the $c_k(\alpha)$ are zero.

As a preliminary step in solving this system for the rational expressions $c_k(\alpha)$
we note that the product of all $p$ equations reads
\[
\bigl(r(\alpha)^p-r(\alpha)\bigr)
\prod_{k=1}^{p-1}c_k(\alpha)
=
(\alpha^p-\alpha)c^p
\prod_{k=1}^{p-1}c_k(\alpha).
\]
Because $c^p=c$ this implies
$
\bigl(r(\alpha)-c\alpha\bigr)^p
=r(\alpha)-c\alpha
$,
whence $r(\alpha)-c\alpha\in\F_p$.
Because $r(\alpha)=0$ we deduce $r(\alpha)=c\alpha$.
Solving
\[
\begin{cases}
\alpha=c_{p-1}(\alpha)\cdot(\alpha^p-\alpha),
&\text{and}
\vspace{0.2cm}\\
\bigl(c\alpha+k\bigr)\cdot c_k(\alpha)=c\cdot c_{k-1}(\alpha)
&\text{for $1\leq k \leq p-1$},
\end{cases}
\]
we conclude that
$c_k(\alpha)=c^k/(c\alpha+k)_k$
for $0\le k<p$,
whence
$P^{(\alpha)}(X)=\gexp^{(c\alpha)}(cX)$ as desired.
\end{proof}

We should mention that an earlier special version of the grading switching achieved in~\cite{AviMat:Laguerre} through
the Laguerre polynomials $L_{p-1}^{(\alpha)}(X)$ was devised in~\cite{Mat:Artin-Hasse} using the
{\em Artin-Hasse} exponential series $E_p(X)=\prod_{i=0}^{\infty}\exp(X^{p^i}/p^i)$.
The coefficients of $E_p(X)$ are $p$-integral rational numbers and can therefore be viewed modulo $p$,
so one may regard $E_p(X)\in\F_p[[X]]$ for the sake of its application to grading switching.
The connection of the earlier theory based on the power series $E_p(X)$ with the more general one based on the polynomials $L_{p-1}^{(\alpha)}(X)$
is explained in~\cite[Proposition~6]{AviMat:gradings}, but here we stress that the success of the former crucially depended on a property of $E_p(X)$
analogous to the property of $L_{p-1}^{(\alpha)}(X)$ described in Theorem~\ref{thm:functional-Laguerre-simplified}:
each term of the power series $E_p(X)E_p(Y)/E_p(X+Y)\in\F_p[[X,Y]]$ has total degree a multiple of $p$.
It was then shown in~\cite{Mat:exponential} that this weak functional equation actually
characterizes $E_p(X)$ in the power series ring $F_p[[X]]$ up to certain natural variations.
Theorem~\ref{thm:equivalence} matches that result for the Laguerre polynomials $L_{p-1}^{(\alpha)}(X)$,
or their scalar multiples $\mathcal{E}^{(\alpha)}(X)$.

\section{Parametric versions of finite polylogarithms}\label{sec:gfp}

The finite polylogarithms
$\pounds_d(X)=\sum_{k=1}^{p-1} X^k/k^d$
are polynomial versions of the power series representations of the ordinary polylogarithms
$\Li_d(X)=\sum_{k=1}^{\infty} X^k/k^d$,
truncated as to make sense over a field of prime characteristic $p$.
In this section we extend the definition of finite polylogarithms
to include a parameter $\alpha$, motivated by the case $d=1$ which we extensively investigated in~\cite{AviMat:glog}.

\subsection{Some properties of finite polylogarithms}\label{subsec:polylog}

Before introducing our generalization $\pounds_d^{(\alpha)}(X)$
we discuss some of the remarkable properties of the finite polylogarithms $\pounds_d(X)$,
including some which we aim to extend to our parametrized versions.
Like their ordinary counterparts $\Li_d(X)$,
finite polylogarithms satisfy a number of functional equations, which are more abundant for small positive values of $d$.
In particular, $\pounds_1(X)$, which is a truncated version of the power series for $-\log(1-X)$
satisfies $\pounds_1(X)=-X^p \cdot \pounds_1(1/X)$ and
\begin{equation}\label{eq:2-term}
\pounds_{1}(X)=\pounds_{1}(1-X).
\end{equation}
Alternate application of these two equations yields six different equivalent representations for $\pounds_1(X)$,
see~\cite[Subsection~2.4]{AviMat:glog} or~\cite[Section~6]{MatTau:truncation} for broader discussions.
Those equations for $\pounds_1(X)$ do not appear to directly relate to any properties of the logarithmic function (or series),
but there is a two-variable functional equation which does, namely the {\em 4-term relation}
\begin{equation}\label{eq:4-term}
\pounds_{1}(X)-\pounds_{1}(Y)+X^p\pounds_{1}\left(\frac{Y}{X}\right)+(1-X)^{p}
\pounds_{1}\left(\frac{1-Y}{1-X}\right)=0,
\end{equation}
to be viewed as an identity in the polynomial ring $\F_p[X,Y]$.
In fact, it is possible to view this equation as an analogue of the classical equation $\log(xy)=\log(x)+\log(y)$,
in its equivalent form
$-\log(1-X)-\log(1-Y)=\log\bigl((1-Y)/(1-X)\bigr)$
in the power series ring $\F_p[[X,Y]]$,
and actually derive it from that.
See~\cite[Subsection~2.4]{AviMat:glog} for a sketch of an argument,
and~\cite[Section~6]{MatTau:truncation} for two different full proofs of Equation~\eqref{eq:4-term}
following this route.

A deeper connection between finite and ordinary polylogarithms was established by Elbaz-Vincent and Gangl in~\cite{EVG:polyanalogsI},
stimulated by questions raised by Kontsevich~\cite{Kontsevich}, who had first exhibited a version of Equation~\eqref{eq:4-term}
dubbing it the {\em generalized fundamental equation of information theory}.
According to~\cite{EVG:polyanalogsI}, many known functional equations for $\pounds_d(X)$ are closely related to functional equations
for the ordinary polylogarithms $\Li_{d+1}(X)$ (with index raised by one),
and can be derived from the latter though a sort of {\em differential,} or {\em infinitesimal} process.
In particular, Equations~\eqref{eq:2-term} and~\eqref{eq:4-term} originate from functional equations for the dilogarithm $\Li_2(X)$,
see~\cite[Proposition~5.9]{EVG:polyanalogsI}.
The same connection works for the functional equations which we are about to discuss, namely the only functional equations which exist for arbitrary $d$.

One functional equation valid for every $\pounds_d(X)$ is the simple {\em inversion relation}~\cite[Proposition~5.7(1)]{EVG:polyanalogsI},
\begin{equation}\label{eq:pounds_inversion}
\pounds_d(X)=(-1)^d X^p \cdot \pounds_d(1/X)
\end{equation}
in $\F_p[X]$, whose special case $d=1$ we have already mentioned.
This is an immediate consequence of Wilson's theorem, $(p-1)!\equiv -1$ in $\F_p$,
and says that the polynomials $\pounds_d(X)$ are essentially self-reciprocal.
The only other functional equation for $\pounds_d(X)$ which exists for arbitrary $d$
is the {\em distribution relation}~\cite[Proposition~5.7(2)]{EVG:polyanalogsI},
\begin{equation}\label{eq:pounds_distribution}
\pounds_d(X^h)=h^{d-1}\sum_{j=0}^{|h|-1}
\frac{1-X^{ph}}{1-\omega^{pj}X^p}
\pounds_d(\omega^j X),
\end{equation}
where $\omega$ is a a primitive $h$th root of unity.
This formulation of the distribution relation restricts the integer $h$ not to be a multiple of $p$, and Equation~\eqref{eq:pounds_distribution} formally takes place
in $\F_q[X]$ for some finite field extension of $\F_q$ containing such a root of unity,
or in fact in its quotient field $\F_q(X)$ when $h$ is negative.
(This restriction could be avoided by viewing the distribution relation as a congruence over a suitable number field rather than an equation over $\F_p$.)
As pointed out in~\cite{EVG:polyanalogsI}, Equation~\eqref{eq:pounds_inversion}
may be viewed as the special case $h=-1$ of Equation~\eqref{eq:pounds_distribution}.

When we view the distribution relation modulo $X^p-1$ all summands vanish except for that with $j=0$, and we find
\begin{equation}\label{eq:pounds_d(x^h)}
\pounds_d(X^h)\equiv h^d \pounds_d(X)\pmod{X^p-1}
\end{equation}
in $\F_p[X]$, again for $h$ not a multiple of $p$.
Replacing $X$ with $1-X$ we can rewrite this in the equivalent form
$
\pounds_d\bigl((1-X)^h\bigr)\equiv h^d \pounds_d(1-X)\pmod{X^p}.
$
In the special case where $d=1$ this can be viewed as a congruence version of the property $\log(x^h)=h\,\log(x)$ of the logarithm.

Equation~\eqref{eq:pounds_d(x^h)} can also be lifted from its special case $d=1$ by means of
a congruence relating finite polylogarithms $\pounds_{d}(X)$ to powers of $\pounds_{1}(X)$, namely,
\begin{equation}\label{eq:pounds_powers}
\pounds_1(X)^d\equiv (-1)^{d-1} d!\, \pounds_d(1-X) \pmod {X^p},
\end{equation}
for $0<d<p-1$.
This congruence, as well as much of the material on finite polylogarithms reviewed here, traces back to Mirimanoff~\cite{Mirimanoff},
who developed it in his investigations on Fermat's Last Theorem, see~\cite[Lecture~VIII, Equation~(1,.27)]{Ribenboim:13}.
A modern proof of a slightly sharper version modulo $X^{p+1}$ of Equation~\eqref{eq:pounds_powers},
which involves a Bernoulli number, can be found in~\cite[Lemma~3.2]{MatTau:polylog}.
When $d=1$ Equation~\eqref{eq:pounds_powers} is a consequence of Equation~\eqref{eq:2-term},
and when $d=2$ or $3$ it can be strengthened to exact functional equations (meaning equalities, not just congruences)
by adding suitable extra terms, see~\cite[Equations~(14) and~(15)]{MatTau:polylog}, also already known to Mirimanoff.
A way of deriving those functional equations for $d=2,3$ from Equation~\eqref{eq:pounds_powers} by the sole use of symmetries is given in~\cite[Section~3]{MatTau:polylog}.
However, no such refinement is known (or likely even exists) for larger values of $d$.

\subsection{Generalized finite polylogarithms}\label{subsec:gfp}

We recall our generalization $\pounds_d^{(\alpha)}(X)$ of finite polylogarithms which we anticipated in Section~\ref{sec:intro}.
For integers $0<k<p$ and $0<a<p$, we let
$p^{e(k,a)}$ be the highest power of $p$ which divides the product of binomial coefficients
$\prod_{s=1}^{k}\binom{sa}{a}$,
and set
$g_k(\alpha)=\prod_{0<a<p}(1+\alpha/a)^{-e(k,a)}$,
viewed as a rational function in $\F_p(\alpha)$.
Then for any integer $d$ we set
\[
\pounds_d^{(\alpha)}(X)=\sum_{k=1}^{p-1} g_k(\alpha)\,X^k/k^d.
\]
This definition has its roots in the special case $d=1$, where
$\pounds_{1}^{(\alpha)}(X)$ is a left compositional inverse of $L_{p-1}^{(\alpha)}(X)$
in the context of the previous section, namely it satisfies
\[
-\pounds_1^{(\alpha)}\bigl(L_{p-1}^{(\alpha)}(X)\bigr)\equiv X \pmod{X^p-(\alpha^p-\alpha)}.
\]
Thus, $\pounds_{1}^{(\alpha)}(X)$ serves a generalization of the truncated logarithm $\pounds_1(X)=\pounds_1^{(0)}(X)$
matching the way $L_{p-1}^{(\alpha)}(X)$ generalizes the truncated exponential.
This definition of $\pounds_1^{(\alpha)}(X)$ extends naturally to $\pounds_d^{(\alpha)}(X)$ by imposing that they have no constant term and they satisfy
$(X\,d/dX)\pounds_d^{(\alpha)}(X)=\pounds_{d-1}^{(\alpha)}(X)$ for all integers $d$,
which is the way ordinary truncated polylogarithms $\pounds_d(X)$ are related.
Because $\pounds_{d+p-1}^{(\alpha)}(X)= \pounds_{d}^{(\alpha)}(X)$, we can assume $0\le d<p-1$ in the sequel.
Also, the case of $p=2$ is uninteresting as then
$\pounds_d^{(\alpha)}(X)=X$ for all $d$, and so we assume $p$ odd throughout this section.

The coefficients $g_k(\alpha)$ originally arose in~\cite{AviMat:glog} as
$g_k(\alpha)=1/\prod_{s=1}^{k-1}b_{1,s}(\alpha)$,
with the polynomials $b_{1,s}(\alpha)\in\F_{p}[\alpha]$ defined as
\[
b_{1,s}(\alpha)=
\sum_{k=0}^{p-1}(-1/s)^k\binom{\alpha -1}{p-1-k}\binom{s \alpha-1}{k},
\]
for $0<s<p-1$.
As explained there they can be viewed as special values of certain Jacobi polynomials,
but what matters here are their full factorizations in $\F_p[\alpha]$, which were found in~\cite{AviMat:glog}.
According to~\cite[Lemma~11]{AviMat:glog} those polynomials satisfy
$b_{1,s}(\alpha)b_{1,s}(-\alpha)=1-\alpha^{p-1}$,
whence each has degree $(p-1)/2$,
which was not obvious from their definition as sums.
Furthermore, the equation implies that $b_{1,s}(\alpha)$ factorizes into products of distinct linear factors in $\F_p[\alpha]$,
and exactly one of each pair of opposite nonzero elements of $\F_p$ is a root.
A simple characterization of which elements of $\F_p$ are roots of $b_{1,s}(\alpha)$ was given in~\cite[Theorem~12]{AviMat:glog},
and for completeness we now show how that leads to the definition of the rational functions $g_k(\alpha)$ which we gave above.

\begin{lemma}
For $0<k<p$ we have
$g_k(\alpha)=1/\prod_{s=1}^{k-1}b_{1,s}(\alpha)$.
\end{lemma}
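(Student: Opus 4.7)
The plan is to read off the factorization of each $b_{1,s}(\alpha)$ as a product of linear factors $(1-\alpha/a)$, then count the multiplicity with which each factor $(1+\alpha/a)$ appears in the product $\prod_{s=1}^{k-1}b_{1,s}(\alpha)$, and finally compare that count against $e(k,a)$.

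First I would record that $b_{1,s}(0)=1$, which is a short direct computation using the defining sum together with the identity $\binom{-1}{p-1-k}\binom{-1}{k}=(-1)^{p-1}=1$ and Fermat's little theorem applied to a geometric sum in $1/s$. Combined with the degree of $b_{1,s}(\alpha)$ being $(p-1)/2$, the splitting of $b_{1,s}(\alpha)$ into distinct linear factors, and the fact that exactly one of each pair $\{a,-a\}\subset\F_p^*$ is a root (all as recalled in the excerpt from the functional equation $b_{1,s}(\alpha)b_{1,s}(-\alpha)=1-\alpha^{p-1}$), this forces the normalized factorization
\[
b_{1,s}(\alpha)=\prod_{a\in R_s}(1-\alpha/a),
\]
where $R_s\subset\F_p^*$ is the root set of $b_{1,s}(\alpha)$. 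Equivalently, the multiplicity of $(1+\alpha/a)$ in $b_{1,s}(\alpha)$ is $1$ if $-a\in R_s$ and $0$ otherwise.

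Next I would simplify $e(k,a)$ in closed form. The telescoping identity
\[
\prod_{s=1}^{k}\binom{sa}{a}=\frac{(ka)!}{(a!)^{k}},
\]
together with $v_p(a!)=0$ for $0<a<p$ and $ka<p^2$, yields via Legendre's formula $e(k,a)=v_p\bigl((ka)!\bigr)=\lfloor ka/p\rfloor$. The lemma is therefore equivalent to the assertion that, for each $a\in\F_p^*$,
\[
\bigl|\{\,s:1\le s\le k-1,\ -a\in R_s\,\}\bigr|=\lfloor ka/p\rfloor.
\]

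For the final step I would invoke the characterization of $R_s$ provided by \cite[Theorem~12]{AviMat:glog}. The expected form of that characterization is a condition on the residue $sa\bmod p$ (equivalently, an indicator of whether a multiple of $p$ lies between $(s-1)a$ and $sa$), so that summing the indicator over $s=1,\dots,k-1$ telescopes to the number of multiples of $p$ in $(0,ka]$, which is precisely $\lfloor ka/p\rfloor$. The main obstacle, and really the only substantial input, is marrying \cite[Theorem~12]{AviMat:glog} to this counting identity; once that is set up, the rest of the argument is formal and the equality of rational functions $g_k(\alpha)=1/\prod_{s=1}^{k-1}b_{1,s}(\alpha)$ follows by matching multiplicities of each linear factor and noting that both sides equal $1$ at $\alpha=0$.
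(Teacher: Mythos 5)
Your proposal is correct and follows essentially the same route as the paper's own proof: both reduce the claim to the root characterization of $b_{1,s}(\alpha)$ from \cite[Theorem~12]{AviMat:glog} and then match multiplicities of the linear factors $(1+\alpha/a)$, your Legendre-formula evaluation $e(k,a)=\lfloor ka/p\rfloor$ being a harmless variant of the paper's direct observation that $p^2$ never divides $\binom{a+sa}{a}$. The one detail to fix is the indexing of the cited characterization: $-a$ is a root of $b_{1,s}(\alpha)$ exactly when $p$ divides $\binom{(s+1)a}{a}$, i.e.\ when a multiple of $p$ lies in the interval $(sa,(s+1)a]$ rather than $((s-1)a,sa]$, so the sum over $s=1,\dots,k-1$ counts the multiples of $p$ in $(a,ka]$, which coincides with the count in $(0,ka]$ because $(0,a]$ contains none, and the telescoping then closes exactly as you predicted.
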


\begin{proof}
Each polynomial $b_{1,s}(\alpha)$ has constant term $b_{1,s}(0)=\sum_{k=0}^{p-1}(-1)^k(1/s)^k=1$, hence
its factorization in $\F_p[\alpha]$ is completely described by its roots.
According to~\cite[Theorem~12]{AviMat:glog}, in its alternate formulation given in~\cite[Remark~13]{AviMat:glog},
an integer $0<a<p$ is a root of $b_{1,s}(\alpha)$ (when interpreted as its image in $\F_p$)
precisely when $p$ does not divide the binomial coefficient $\binom{a+sa}{a}$.
Equivalently, $-a$ is a root of $b_{1,s}(\alpha)$
precisely when $p$ divides the binomial coefficient $\binom{a+sa}{a}$.
Now note that $p^2$ cannot divide $\binom{a+sa}{a}$ and the conclusion follows.
\end{proof}

\subsection{Congruential functional equations for $\pounds_d^{(\alpha)}(X)$}\label{subsec:equations}
The main goal of the remainder of this paper is to provide analogues for our
generalized finite polylogarithms $\pounds_d^{(\alpha)}(X)$
of some of the known relations among finite polylogarithms defined for generic $d$,
which we summarized in Subsection~\ref{subsec:polylog}.
Thus, besides the easy Equation~\eqref{eq:pounds_inversion} we will generalize
Equation~\eqref{eq:pounds_powers}, and then use that to generalize
Equation~\eqref{eq:pounds_d(x^h)}.
We will state our results here and prove them in the next section.

We assign a name to a polynomial which will occur repeatedly, namely,
\begin{equation}\label{eq:T}
T(X):=L_{p-1}^{(X^p)}(X^p-X)=\prod_{i=1}^{p-1}(1+X/i)^i,
\end{equation}
where the explicit factorization given was proved in~\cite[Lemma~1]{AviMat:Laguerre}.
This polynomial will occur in the modulus $X^p-T(\alpha)$ of various congruences involving $\pounds_d^{(\alpha)}(X)$,
but also, for example, in an expression for the highest coefficient of $\pounds_d^{(\alpha)}(X)$, because
$g_{p-1}(\alpha)=(1-\alpha^{p-1})/T(\alpha)$.
This was proved in~\cite[Corollary~16]{AviMat:glog}, but can also be easily shown directly from our definition of $g_{p-1}(\alpha)$,
as we explain now as an example of such evaluations.

According to Lucas' theorem on binomial coefficients modulo a prime, $p$ divides the factor $\binom{sa}{a}$ in our definition of $g_k(\alpha)$ precisely when
the (least nonnegative) remainder of dividing $(s-1)a$ by $p$ is not less than $p-a$.
In the case of $g_{p-1}(\alpha)$ the remainders of dividing $(s-1)a$ by $p$, for a given $a$ as $s$ ranges over $0<s<p$,
will take all values from $0$ to $p-1$ with the exception of $p-a$,
hence precisely $a-1$ of them will exceed $p-a$.
Therefore, we find
$g_{p-1}(\alpha)=\prod_{a=1}^{p-1}(1-\alpha/a)^{-a+1}$
as desired.

Another relation among the coefficients $g_k(\alpha)$ amounts to the symmetry relation
$b_{1,s}(\alpha)=b_{1,p-1-s}(\alpha)$
of~\cite[Corollary~14]{AviMat:glog}, for $0<s<p-1$.
Taken together, in terms of the polynomials $g_k(\alpha)$, those equations are equivalent to
\begin{equation}\label{eq:sym}
g_k(\alpha)\cdot g_{p-k}(\alpha)=g_{p-1}(\alpha),
\qquad\text{for $0<k<p$.}
\end{equation}
As a consequence of this symmetry, one has
\[
T(\alpha)\cdot\pounds_{d}^{(\alpha)}(X)=(-1)^d X^{p} \cdot \pounds_{d}^{(-\alpha)}
\biggl(\frac{1-\alpha^{p-1}}{X}\biggr),
\]
a generalization of Equation~\eqref{eq:pounds_inversion} which can be proved in the same way as its special case $d=1$ in~\cite[Theorem~6]{AviMat:glog}.

Our main result on generalized polylogarithms is a generalization of Equation~\eqref{eq:pounds_powers}.
This generalized version does not relate $\pounds_{1}^{(\alpha)}(X)^d$ to $\pounds_{d}^{(\alpha)}(X)$ alone,
but also involves lower polylogarithms.
Denoting by ${n \brack k}$ the (unsigned) Stirling number of the first kind, which for $0<k\le n$
may be characterized by the polynomial identities
$(X+n-1)_n=\sum_{k=1}^{n}{n \brack k}X^k$ in $\Z[X]$,
we have the following result.

\begin{theorem}\label{thm:powers}
For any $0<d< p-1$ we have
\[
\frac{\pounds_{1}^{(\alpha)}(X)^d}{d}\equiv
(-1)^{d-1}\sum_{r=0}^{d-1}{d \brack r+1} \alpha^r\pounds_{d-r}^{(\alpha)}(X) \pmod{X^p-T(\alpha)}
\]
in the polynomial ring $\F_{p}(\alpha)[X]$.
\end{theorem}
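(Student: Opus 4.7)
The plan is to reduce the congruence to a polynomial identity in $Y$ via the substitution $X = L_{p-1}^{(\alpha)}(Y)$, and then prove that identity by induction on $d$. Since $L_{p-1}^{(\alpha)}(Y)^p = L_{p-1}^{(\alpha^p)}(Y^p) \equiv L_{p-1}^{(\alpha^p)}(\alpha^p-\alpha) = T(\alpha)$ modulo $Y^p-(\alpha^p-\alpha)$, the assignment $X\mapsto L_{p-1}^{(\alpha)}(Y)$ descends to a homomorphism of quotient rings $\F_p(\alpha)[X]/(X^p-T(\alpha)) \to \F_p(\alpha)[Y]/(Y^p-(\alpha^p-\alpha))$. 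Both quotients are field extensions of degree $p$ over $\F_p(\alpha)$, and since the image of $X$ is nonconstant this map is an isomorphism, so it suffices to verify the congruence after substitution. Using $\pounds_{1}^{(\alpha)}(L_{p-1}^{(\alpha)}(Y))\equiv -Y$, the LHS becomes $(-Y)^d/d$, while the RHS becomes $(-1)^{d-1}\tilde H_d(Y)$ with $\tilde H_d(Y):=\sum_{r=0}^{d-1}{d \brack r+1}\alpha^r G_{d-r}(Y)$ and $G_k(Y):=\pounds_k^{(\alpha)}(L_{p-1}^{(\alpha)}(Y))$. The theorem thus becomes the identity $\tilde H_d(Y)\equiv -Y^d/d$ modulo $Y^p-(\alpha^p-\alpha)$.

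The key tool is a recursion for $G_k$. Chain rule combined with $X\pounds_k^{(\alpha)\prime}(X)=\pounds_{k-1}^{(\alpha)}(X)$ and Equation~\eqref{eq:L-diff} yields $Y\,dG_k/dY \equiv (Y-\alpha) G_{k-1}(Y)$ modulo $Y^p-(\alpha^p-\alpha)$ for $k\geq 1$; dividing by $L_{p-1}^{(\alpha)}(Y)$ along the way is legitimate because $L_{p-1}^{(\alpha)}(Y)^p = T(\alpha)$ is a unit in the quotient. For $1\leq k \leq p-2$ both sides have degree less than $p$, so this is an exact polynomial identity, and iterating from $G_1=-Y$ realises $G_k$ as a polynomial of degree $k$ with $G_k(0)=0$.

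I would then prove $\tilde H_d(Y) = -Y^d/d$ by induction on $d$, the base case $d=1$ being immediate from $\tilde H_1 = G_1 = -Y$. For the induction step, I differentiate $\tilde H_d$, multiply by $Y$, and apply the recursion, handling the $r=d-1$ boundary term (where $YG_1' = -Y$) separately to obtain
\[
Y\tilde H_d'(Y) = -\alpha^{d-1}Y + (Y-\alpha)\sum_{s=1}^{d-1}{d \brack s}\alpha^{s-1}G_{d-s}(Y).
\]
The Stirling recurrence ${d \brack s} = {d-1 \brack s-1} + (d-1){d-1 \brack s}$ splits the inner sum into two parts. The piece with $(d-1){d-1 \brack s}$ is exactly $(d-1)\tilde H_{d-1}$, which equals $-Y^{d-1}$ by the inductive hypothesis. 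The piece with ${d-1 \brack s-1}$, after shifting indices, is $\alpha$ times precisely the sum that arises when the same differentiation procedure is applied to $\tilde H_{d-1}$ itself; using $\tilde H_{d-1}' = -Y^{d-2}$, this sum equals $-\sum_{j=0}^{d-3}\alpha^j Y^{d-2-j}$. Combining the two parts, $(Y-\alpha)\sum_s$ telescopes to $-Y^d + \alpha^{d-1}Y$, the boundary term cancels, and we are left with $Y\tilde H_d'(Y) = -Y^d$. Since $\tilde H_d(0)=0$ and $d$ is invertible in $\F_p$, this forces $\tilde H_d(Y) = -Y^d/d$, completing the induction.

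The main obstacle is precisely the differentiation step: the Stirling recurrence produces two subsums, and only one of them is the inductive identity $\tilde H_{d-1}$ itself; the other requires separately invoking the differentiated form of the inductive hypothesis. Matching the boundary term $-\alpha^{d-1}Y$ against the telescoping identity $(Y-\alpha)\sum_{j=0}^{d-2}\alpha^j Y^{d-1-j} = Y^d - \alpha^{d-1}Y$, and keeping the two reindexings straight, is the bookkeeping that makes the induction close.
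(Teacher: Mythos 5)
Your route is genuinely different from the paper's. The paper stays on the $X$-side: it writes $\pounds_{1}^{(\alpha)}(X)^d/d\equiv\sum_k c_k(\alpha)X^k$, applies $X\,d/dX$, and uses the product congruence $\pounds_{0}^{(\alpha)}(X)\pounds_{1}^{(\alpha)}(X)\equiv-\pounds_{1}^{(\alpha)}(X)-\alpha\pounds_{0}^{(\alpha)}(X)$ (its Lemma~\ref{lemma:G_0G_1}, proved by essentially the substitution $X=L_{p-1}^{(\alpha)}(Y)$ that you perform globally) together with the inductive hypothesis and its derivative to solve for $k\,c_k(\alpha)$ coefficient by coefficient. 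You instead transport the whole statement to the $Y$-side once and for all and run the induction there, with the Stirling recurrence doing the telescoping. The transfer is legitimate (one should note that $T(\alpha)$ is not a $p$th power in $\F_p(\alpha)$ --- the factor $1+\alpha$ occurs with exponent one --- so $X^p-T(\alpha)$ is irreducible and the substitution induces an isomorphism of the two degree-$p$ purely inseparable extensions), the recursion $Y\,dG_k/dY\equiv(Y-\alpha)G_{k-1}$ is correct, and I have checked that the index bookkeeping in your induction step closes.

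The one genuine gap is the constant term. You assert that ``iterating from $G_1=-Y$ realises $G_k$ as a polynomial of degree $k$ with $G_k(0)=0$,'' but the recursion $Y\,G_k'=(Y-\alpha)G_{k-1}$ determines $G_k$ only up to an additive constant: each integration step leaves $G_k(0)$ completely free, so iteration alone cannot yield $G_k(0)=0$. This is not a pedantic point. Your induction closes with ``since $\tilde H_d(0)=0$,'' i.e.\ $\sum_{r}{d \brack r+1}\alpha^r G_{d-r}(0)=0$, which is exactly where these undetermined constants enter; moreover the claim genuinely fails at $k=p-1$ (this is why the $d=p-1$ case of the theorem acquires the extra term $1-\alpha^{p-1}$). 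The paper meets the same difficulty as the vanishing of $c_0(\alpha)$ and devotes a separate, nontrivial argument to it, using the symmetry $g_k(\alpha)g_{p-k}(\alpha)=g_{p-1}(\alpha)$ and the vanishing of $\sum_{k=1}^{p-1}1/k^r$ for $0<r<p-1$. In your framework the repair is cheaper, but it must be made explicit: once you know inductively that $\deg G_{k-1}=k-1\le p-2$, the congruence $Y\,G_k'\equiv(Y-\alpha)G_{k-1}$ relates two polynomials of degree less than $p$ and is therefore an equality, and comparing constant terms of the two sides gives $0=-\alpha\,G_{k-1}(0)$, hence $G_{k-1}(0)=0$ for $1\le k-1\le p-2$. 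With that observation inserted, your proof is complete.
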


In its specialization at $\alpha=0$, because ${d \brack 1}=(d-1)!$ Theorem~\ref{thm:powers} reads
$\pounds_1 (X)^d \equiv (-1)^{d-1} d!\,\pounds_d(X)\pmod{X^{p}-1}$.
We recover Equation~\eqref{eq:pounds_powers}
by replacing $X$ with $1-X$ and taking equation $\pounds_1 (X)=\pounds_1 (1-X)$ into account.

The congruence of Theorem~\ref{thm:powers} can be extended to $d=p-1$
but requires an extra term $1-\alpha^{p-1}$ at the right-hand side in that case.
Because
${p-1\brack k}\equiv 1\pmod{p}$ for $0<k<p$,
the congruence for $d=p-1$ reads
\[
\pounds_{1}^{(\alpha)}(X)^{p-1}\equiv
1-\alpha^{p-1}+
\sum_{r=0}^{p-2}\alpha^r\pounds_{p-1-r}^{(\alpha)}(X) \pmod{X^p-T(\alpha)}.
\]

Our next result generalizes Equation~\eqref{eq:pounds_d(x^h)}.
Its special case where $d=1$ is~\cite[Theorem~8]{AviMat:glog}, and we use
Theorem~\ref{thm:powers} to extend that to higher values of $d$.

\begin{theorem}\label{thm:pounds_d(x^h)}
For $0<h<p$ and $0<d<p-1$ we have
\[
\pounds_{d}^{(h\alpha)}\bigl(g_h(\alpha)X^h\bigr)
\equiv h^d \pounds_{d}^{(\alpha)}(X)
\pmod {X^p-T(\alpha)}
\]
in the polynomial ring $\F_{p}(\alpha)[X]$.
\end{theorem}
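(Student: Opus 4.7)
The plan is to induct on $d$, using the case $d=1$ (Theorem~8 of~\cite{AviMat:glog}) as the base and Theorem~\ref{thm:powers} as the main lever. The inductive step combines one application of Theorem~\ref{thm:powers} at parameter $\alpha$ with a second application at parameter $h\alpha$, into which I substitute $Y=g_h(\alpha)X^h$.

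Concretely, fix $d$ with $1<d<p-1$ and assume the theorem holds for all smaller positive indices. Multiplying Theorem~\ref{thm:powers} at $\alpha$ by $h^d$ gives
$$\frac{h^d\pounds_{1}^{(\alpha)}(X)^d}{d}\equiv h^d(-1)^{d-1}\sum_{r=0}^{d-1}{d\brack r+1}\alpha^r\pounds_{d-r}^{(\alpha)}(X)\pmod{X^p-T(\alpha)}.$$
Applying Theorem~\ref{thm:powers} at $h\alpha$ yields a congruence in $\F_p(\alpha)[Y]$ modulo $Y^p-T(h\alpha)$; substituting $Y=g_h(\alpha)X^h$ turns the modulus into $g_h(\alpha)^p X^{hp}-T(h\alpha)$. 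For the substituted congruence to descend to one modulo $X^p-T(\alpha)$, I need the scalar identity $g_h(\alpha)^p\,T(\alpha)^h=T(h\alpha)$ in $\F_p(\alpha)$, after which $g_h(\alpha)^p X^{hp}-T(h\alpha)$ factors as $g_h(\alpha)^p(X^p-T(\alpha))\bigl(X^{(h-1)p}+\cdots+T(\alpha)^{h-1}\bigr)$, manifestly a multiple of $X^p-T(\alpha)$. The base case $d=1$ then lets me replace $\pounds_{1}^{(h\alpha)}(g_h(\alpha)X^h)$ by $h\pounds_{1}^{(\alpha)}(X)$ before raising to the $d$-th power, so that the left-hand sides of the two congruences coincide.

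Equating the right-hand sides and using $h^d\alpha^r=h^{d-r}(h\alpha)^r$, I obtain
$$\sum_{r=0}^{d-1}{d\brack r+1}(h\alpha)^r\Bigl[\pounds_{d-r}^{(h\alpha)}\bigl(g_h(\alpha)X^h\bigr)-h^{d-r}\pounds_{d-r}^{(\alpha)}(X)\Bigr]\equiv 0\pmod{X^p-T(\alpha)}.$$
For $r=1,\ldots,d-1$ the index $d-r$ lies in $\{1,\ldots,d-1\}$, so the bracket vanishes by induction. Only the $r=0$ term remains, and since ${d\brack 1}=(d-1)!$ is a unit of $\F_p$ (as $d<p$), dividing by it delivers the stated congruence.

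The main obstacle is the scalar identity $g_h(\alpha)^p\,T(\alpha)^h=T(h\alpha)$ itself, which is what makes the modulus compatible with the substitution $Y=g_h(\alpha)X^h$. I expect it to be provable directly from the product formulas $g_h(\alpha)=\prod_{a}(1+\alpha/a)^{-e(h,a)}$ and $T(\alpha)=\prod_{i}(1+\alpha/i)^i$ by a Lucas-style analysis of the exponents $e(h,a)$ under $\alpha\mapsto h\alpha$, in the same spirit as the calculations in~\cite{AviMat:glog} that yield the $d=1$ base case.
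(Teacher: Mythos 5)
Your argument is correct and is essentially the paper's own proof: the same induction on $d$ anchored at the case $d=1$ from~\cite{AviMat:glog}, the same double application of Theorem~\ref{thm:powers} (once at $\alpha$, once at $h\alpha$ with $X$ replaced by $g_h(\alpha)X^h$), and the same reduction of the modulus $\bigl(g_h(\alpha)X^h\bigr)^p-T(h\alpha)$ to a multiple of $X^p-T(\alpha)$ via the scalar identity $T(h\alpha)=g_h(\alpha)^p\,T(\alpha)^h$; cancelling the terms with $r\ge 1$ by induction and dividing by $(d-1)!$ is only a cosmetic rearrangement of the paper's chain of congruences. The one piece you leave as a sketch is that scalar identity. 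The paper derives it from Equation~(6) of~\cite{AviMat:glog}, namely $g_h(\alpha)\bigl(L_{p-1}^{(\alpha)}(X)\bigr)^h\equiv L_{p-1}^{(h\alpha)}(hX)\pmod{X^p-(\alpha^p-\alpha)}$, by specializing $X=\alpha-\alpha^{1/p}$ and taking $p$th powers. Your proposed direct route also works and is worth completing: comparing the factorizations $T(\alpha)=\prod_{i}(1+\alpha/i)^{i}$ and $g_h(\alpha)=\prod_{a}(1+\alpha/a)^{-e(h,a)}$ reduces the identity to the claim $e(h,a)=\lfloor ha/p\rfloor$ for $0<a,h<p$, which follows from Kummer's theorem as in the Lemma of Subsection~3.2: each $\binom{sa}{a}$ contributes at most one factor of $p$, and does so precisely when $(s-1)a\bmod p\ge p-a$, which is exactly the condition for $\lfloor sa/p\rfloor$ to exceed $\lfloor (s-1)a/p\rfloor$. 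So there is no gap in the idea, only a computation to be written out.
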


Our final result combines evaluations of all generalized finite polylogarithms $\pounds_{d}^{(r\alpha)}$ as $r$ varies from $1$ to $p-1$
and relates them to the standard finite polylogarithm $\pounds_d(X)$.
To avoid having to extend the ground field with $\alpha^{1/p}$ we conveniently replace $\alpha$ with $\alpha^p$ in the statement.

\begin{theorem}\label{thm:GP}
For any integer $d$ we have
\[
\sum_{r=1}^{p-1}\pounds_{d}^{(r\alpha^p)}\bigl(T(r\alpha)\,X\bigr)=(\alpha^{p-1}-1)\pounds_d(X)
\]
in the polynomial ring $\F_{p}(\alpha)[X]$.
\end{theorem}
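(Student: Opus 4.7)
The plan is to reduce Theorem~\ref{thm:GP} first to a family of coefficient-wise identities (one per $0<k<p$), then via an auxiliary multiplicative identity to a single scalar identity about sums of scaled values of $T(\alpha)$.

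Since both sides of the claimed equation are polynomials in $X$ of degree less than $p$, expanding $\pounds_{d}^{(\beta)}(Y)=\sum_{k=1}^{p-1}g_{k}(\beta)\,Y^{k}/k^{d}$ and equating the coefficient of $X^{k}/k^{d}$ reduces the theorem to the identities
\[
\sum_{r=1}^{p-1} g_{k}(r\alpha^{p})\,T(r\alpha)^{k}=\alpha^{p-1}-1 \qquad (0<k<p),
\]
each independent of $d$.

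For the auxiliary step I would establish
\[
g_{k}(\alpha)^{p}\,T(\alpha)^{k} = T(k\alpha) \qquad\text{in } \F_{p}(\alpha).
\]
Since $g_{k}\in\F_{p}(\alpha)$, Frobenius yields $g_{k}(r\alpha^{p})=g_{k}(r\alpha)^{p}$. The Kummer--Legendre formula $e(k,a)=(ak-s_{p}(ak))/(p-1)$ implicit in the definition of $g_{k}$ simplifies the exponent of $(1+\alpha/a)$ in $g_{k}(\alpha)^{p}T(\alpha)^{k}$ to $ak-p\,e(k,a)=ak\bmod p$, so $g_{k}(\alpha)^{p}T(\alpha)^{k}=\prod_{a=1}^{p-1}(1+\alpha/a)^{ak\bmod p}$. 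Changing index via $i=ka\bmod p$ (a bijection of $\F_{p}^{*}$) in $T(k\alpha)=\prod_{i=1}^{p-1}(1+k\alpha/i)^{i}$ also yields $\prod_{a}(1+\alpha/a)^{ak\bmod p}$, proving the identity. Specializing $\alpha\to r\alpha$, summing over $r$, and re-indexing via $r\mapsto rk$ (another bijection of $\F_{p}^{*}$) reduces the coefficient identities to the single statement
\[
\sum_{m=1}^{p-1}T(m\alpha)=\alpha^{p-1}-1,
\]
now entirely independent of $k$.

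The main obstacle is this last identity. I would insert the Laguerre expansion $L_{p-1}^{(\gamma)}(X)=\sum_{k=0}^{p-1}\binom{\gamma-1}{p-1-k}(-X)^{k}/k!$ (valid in $\F_{p}[\gamma,X]$) with $\gamma=m\alpha^{p}$ and $X=m(\alpha^{p}-\alpha)$ into $T(m\alpha)=L_{p-1}^{(m\alpha^{p})}(m(\alpha^{p}-\alpha))$, exchange the summations, and evaluate the inner sum $B_{k}:=\sum_{m=1}^{p-1}m^{k}\binom{m\alpha^{p}-1}{p-1-k}$. Viewed as a polynomial in $m$ of degree $p-1$, its constant term is $(-1)^{p-1-k}$ and its leading coefficient is $\alpha^{p(p-1-k)}/(p-1-k)!$; the sum $\sum_{m\in\F_{p}^{*}}m^{j}$ vanishes except for $j=0$ and $j=p-1$, leaving only those two contributions. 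Simplifying with Wilson's theorem $(p-1)!\equiv -1$ and $\binom{p-1}{k}\equiv(-1)^{k}$, the identity collapses to
\[
\sum_{m=1}^{p-1}T(m\alpha) = -1 + \sum_{k=0}^{p-1}(\alpha^{p}-\alpha)^{k}(\alpha^{p})^{p-1-k}.
\]
Setting $a=\alpha^{p}-\alpha$ and $b=\alpha^{p}$, so $b-a=\alpha$ and $b^{p}-a^{p}=\alpha^{p}$ by the characteristic-$p$ Frobenius identity, the geometric sum equals $(b^{p}-a^{p})/(b-a)=\alpha^{p-1}$. This yields $\sum_{m=1}^{p-1}T(m\alpha)=\alpha^{p-1}-1$, completing the proof.
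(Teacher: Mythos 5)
Your proposal is correct, and while it opens the same way as the paper's proof, its endgame is genuinely different. Both arguments begin by reducing the theorem to the coefficient identities $\sum_{r=1}^{p-1}g_k(r\alpha^p)\,T(r\alpha)^k=\alpha^{p-1}-1$ and both rest on the multiplicative identity $g_k(\alpha)^p\,T(\alpha)^k=T(k\alpha)$; you prove the latter from first principles via Legendre's formula for $e(k,a)$ and the product expansion of $T$, whereas the paper imports it from a congruence for powers of $L_{p-1}^{(\alpha)}(X)$ established in \cite{AviMat:glog}. The divergence comes next: the paper exploits the symmetry $T(kr\alpha)=g_r(k\alpha^p)T(k\alpha)^r$ to recognize the sum as $\pounds_0^{(k\alpha^p)}\bigl(T(k\alpha)\bigr)$ and then evaluates that by combining Lemma~\ref{lemma:G_0G_1} with the compositional-inverse relation giving $\pounds_1^{(\alpha^p)}\bigl(T(\alpha)\bigr)=\alpha-\alpha^p$, while you instead reindex $r\mapsto kr$ to reach the single scalar identity $\sum_{m=1}^{p-1}T(m\alpha)=\alpha^{p-1}-1$ and verify it head-on from the Laguerre expansion of $T(m\alpha)=L_{p-1}^{(m\alpha^p)}\bigl(m(\alpha^p-\alpha)\bigr)$, using the vanishing of power sums over $\F_p^{*}$, Wilson's theorem, and the geometric sum $\sum_k a^k b^{p-1-k}=(b^p-a^p)/(b-a)=\alpha^{p-1}$. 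Your route is more elementary and self-contained (it bypasses Lemma~\ref{lemma:G_0G_1} and the inverse relation entirely) at the price of a more computational final step; I verified the computation and it checks out, including the collapse to $-1+\alpha^{p-1}$. One sentence to tighten: the polynomial $m^k\binom{m\alpha^p-1}{p-1-k}$ has \emph{no} constant term in $m$ when $k\geq 1$ (its monomials run from degree $k$ to degree $p-1$), so the ``$j=0$ contribution'' of $(-1)^{p-1-k}$ arises only for $k=0$; your final formula already reflects this correctly, but the phrasing should make explicit that for $k\geq 1$ only the leading coefficient survives the summation over $m$.
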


Note that Theorem~\ref{thm:GP} states an identity, not just a congruence.
Because $\pounds_{d}^{(0)}(X)=\pounds_{d}(X)$ and $T(0)=1$ that can also be written as
\[
\sum_{r=0}^{p-1}\pounds_{d}^{(r\alpha^p)}\bigl(T(r\alpha)\,X\bigr)=\alpha^{p-1}\pounds_d(X).
\]
In a sense the special case $d=1$ of Theorem~\ref{thm:GP} gives an admittedly rather trivial answer to Question~7 in~\cite{AviMat:glog},
which asked for a generalization of the functional equation $\pounds_1(1-X)=\pounds_1(X)$
for the polynomials $\pounds_1^{(\alpha)}(X)$, possibly involving various values of $\alpha$:
when $d=1$ the left-hand side of the equation of Theorem~\ref{thm:GP} is invariant under the substitution
$X\mapsto 1-X$, because the right-hand side is.
A subtler answer appears now unlikely.

\section{Proofs of Theorem~\ref{thm:powers}, Theorem~\ref{thm:pounds_d(x^h)}, and Theorem~\ref{thm:GP} }\label{sec:proofs}

Our proof of Theorem~\ref{thm:powers} will proceed by applying the differential operator $X\,d/dX$ to the desired congruence,
whence the left-hand side will give
$\pounds_0^{(\alpha)}(X)\cdot\pounds_1^{(\alpha)}(X)^{d-1}$.
Working inductively, a crucial step will be expressing the product of $\pounds_0^{(\alpha)}(X)$ and $\pounds_1^{(\alpha)}(X)$
as a linear combination of them, which is what the next congruence achieves.

\begin{lemma}\label{lemma:G_0G_1}
The product $\pounds_{0}^{(\alpha)}(X) \cdot \pounds_{1}^{(\alpha)}(X)$ satisfies the congruence
\[
\pounds_{0}^{(\alpha)}(X) \cdot \pounds_{1}^{(\alpha)}(X)
\equiv
-\pounds_{1}^{(\alpha)}(X)
-\alpha \pounds_{0}^{(\alpha)}(X)
\pmod{X^p-T(\alpha)}
\]
in the polynomial ring $\F_{p}(\alpha)[X]$.
\end{lemma}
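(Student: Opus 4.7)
The plan is to reduce the congruence modulo $X^p - T(\alpha)$ to an identity modulo $Y^p - (\alpha^p - \alpha)$ by way of the substitution $X = L_{p-1}^{(\alpha)}(Y)$, which makes the defining relation of $\pounds_{1}^{(\alpha)}$ directly accessible.

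First I would verify that this substitution yields a well-defined $\F_p(\alpha)$-algebra map from $\F_p(\alpha)[X]/(X^p - T(\alpha))$ to $\F_p(\alpha)[Y]/(Y^p - (\alpha^p - \alpha))$. Writing $L_{p-1}^{(\alpha)}(Y) = \sum_{k=0}^{p-1} a_k(\alpha) Y^k$ and exploiting that Frobenius sends each rational coefficient $a_k(\alpha) \in \F_p(\alpha)$ to $a_k(\alpha^p)$, one computes
\[
L_{p-1}^{(\alpha)}(Y)^p = L_{p-1}^{(\alpha^p)}(Y^p) \equiv L_{p-1}^{(\alpha^p)}(\alpha^p - \alpha) = T(\alpha) \pmod{Y^p - (\alpha^p - \alpha)},
\]
so the map is well defined. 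Both quotients are in fact fields: the factor $1 + \alpha$ appears in $T(\alpha) = \prod_{i=1}^{p-1}(1+\alpha/i)^i$ with multiplicity one, so $T(\alpha)$ is not a $p$-th power in $\F_p(\alpha)$, and similarly $\alpha^p - \alpha \notin \F_p(\alpha^p)$ is not a $p$-th power. As a nonzero $\F_p(\alpha)$-linear ring map between fields of the same dimension $p$, the substitution is therefore an isomorphism, and it suffices to verify the desired congruence after applying it.

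Under this substitution $\pounds_1^{(\alpha)}(L_{p-1}^{(\alpha)}(Y)) \equiv -Y$ by definition. Differentiating this congruence with respect to $Y$ (which is legitimate because $d/dY$ annihilates $Y^p - (\alpha^p - \alpha)$) and then applying Equation~\ref{eq:L-diff} in the form $Y (L_{p-1}^{(\alpha)})'(Y) \equiv (Y-\alpha) L_{p-1}^{(\alpha)}(Y)$, I would obtain
\[
\pounds_0^{(\alpha)}(L_{p-1}^{(\alpha)}(Y)) = L_{p-1}^{(\alpha)}(Y) \cdot (\pounds_1^{(\alpha)})'(L_{p-1}^{(\alpha)}(Y)) \equiv -\frac{Y}{Y-\alpha},
\]
where $Y$ and $Y-\alpha$ are invertible modulo $Y^p - (\alpha^p - \alpha)$ since $Y^p \equiv \alpha^p - \alpha$ and $(Y-\alpha)^p \equiv -\alpha$ are both nonzero in $\F_p(\alpha)$.

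With these two evaluations in hand the claim collapses to the rational identity
\[
(-Y)\cdot\left(-\frac{Y}{Y-\alpha}\right) = -(-Y) - \alpha\left(-\frac{Y}{Y-\alpha}\right),
\]
which clears denominators to $Y^2 = Y(Y-\alpha) + \alpha Y$ and is immediate. The main obstacle is the setup in the second paragraph, namely establishing the Frobenius computation $L_{p-1}^{(\alpha)}(Y)^p \equiv T(\alpha)$ together with the irreducibility of the two moduli, which together justify transferring congruences between the quotient rings; once these foundational facts are in place, the remaining chain of manipulations is short and purely mechanical.
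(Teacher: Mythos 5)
Your proof is correct. Its computational core coincides with the paper's: both arguments differentiate the compositional-inverse congruence $-\pounds_{1}^{(\alpha)}\bigl(L_{p-1}^{(\alpha)}(Y)\bigr)\equiv Y$, feed in Equation~\eqref{eq:L-diff}, and arrive at $\pounds_{0}^{(\alpha)}\bigl(L_{p-1}^{(\alpha)}(Y)\bigr)\equiv -Y/(Y-\alpha)$ modulo $Y^p-(\alpha^p-\alpha)$. Where you genuinely diverge is in how the resulting identity is transported to the modulus $X^p-T(\alpha)$. The paper pushes the congruence \emph{forward} by making the substitution invertible: it extends scalars to $\F_p(\alpha^{1/p})$, shifts the variable so that the modulus becomes $x^p$, and works in the power series ring $\F_p(\alpha^{1/p})[[x]]$, where $L_{p-1}^{(\alpha)}(x+\alpha-\alpha^{1/p})-T(\alpha^{1/p})$ admits a compositional inverse. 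You instead pull the desired congruence \emph{back}: the Frobenius computation $L_{p-1}^{(\alpha)}(Y)^p=L_{p-1}^{(\alpha^p)}(Y^p)\equiv T(\alpha)$ shows that $X\mapsto L_{p-1}^{(\alpha)}(Y)$ induces a well-defined $\F_p(\alpha)$-algebra map $\F_p(\alpha)[X]/(X^p-T(\alpha))\to\F_p(\alpha)[Y]/\bigl(Y^p-(\alpha^p-\alpha)\bigr)$, and your multiplicity-one observation on the factor $1+\alpha$ of $T(\alpha)$ correctly shows the source is a field, so the map is injective and it suffices to verify the image of the identity (injectivity is all you actually need; the surjectivity part of your isomorphism claim is a bonus). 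Your route buys a cleaner argument that stays over $\F_p(\alpha)$, avoids both the inseparable extension and the power-series formalism, and replaces an explicit inverse substitution by a soft injectivity argument; the paper's heavier change-of-uniformizer machinery is the same device it reuses from the proofs of Corollary~3 and Theorem~8 of~\cite{AviMat:glog}, where one genuinely needs to express a polynomial in the new variable rather than merely verify an identity already known on one side.
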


\begin{proof}
We apply the differential operator $d/dX$ to both sides of the congruence
\begin{equation}\label{eq:inverses}
-\pounds_{1}^{(\alpha)}(L_{p-1}^{(\alpha)}(X))\equiv X
\pmod{X^p-(\alpha^p-\alpha)},
\end{equation}
using Equations~\eqref{eq:L-diff} and
$(X\,d/dX)\pounds_d^{(\alpha)}(X)=\pounds_{d-1}^{(\alpha)}(X)$.
Noting that both $X$ and $L_{p-1}^{(\alpha)}(X)$
are coprime with the modulus, the latter because of Equation~\eqref{eq:T},
and hence are invertible in the quotient ring $\F_p(\alpha)[X]/\bigl(X^p-(\alpha^p-\alpha)\bigr)$, we find
\[
-\frac{1}{L_{p-1}^{(\alpha)}(X)}
\pounds_{0}^{(\alpha)}\bigl(L_{p-1}^{(\alpha)}(X)\bigr)
\cdot
\frac{X-\alpha}{X}
L_{p-1}^{(\alpha)}(X)
\equiv 1
\pmod{X^p-(\alpha^p-\alpha)}.
\]
After cancellation and multiplication by $X$ we obtain
\[
-\pounds_{0}^{(\alpha)}\bigl(L_{p-1}^{(\alpha)}(X)\bigr)
\cdot
(X-\alpha)
\equiv X
\pmod{X^p-(\alpha^p-\alpha)}.
\]

Now we would like to regard $L_{p-1}^{(\alpha)}(X)$ as a new variable, but this will require a foray into a power series ring
in a similar fashion as in the proofs of Corollary~3 and Theorem~8 in~\cite{AviMat:glog}.
Thus, we extend the ground field to $\F_p(\alpha^{1/p})$,
where $X^p-(\alpha^p-\alpha)$ becomes a $p$th power, and after setting
$X=x+\alpha-\alpha^{1/p}$
the congruence we have found reads
\begin{equation}\label{eq:inverses_derived}
-\pounds_{0}^{(\alpha)}\bigl(L_{p-1}^{(\alpha)}(x+\alpha-\alpha^{1/p})\bigr)
\cdot
(x-\alpha^{1/p})
\equiv
x+\alpha-\alpha^{1/p}
\pmod{x^p},
\end{equation}
in the polynomial ring $\F_p(\alpha^{1/p})[x]$.
In the same way, Equation~\eqref{eq:inverses} is equivalent to the congruence
\begin{equation}\label{eq:inverses_power_series}
-\pounds_1^{(\alpha)}\bigl(L_{p-1}^{(\alpha)}(x+\alpha-\alpha^{1/p})\bigr)-(\alpha-\alpha^{1/p})\equiv x \pmod{x^p}
\end{equation}
in the polynomial ring $\F_p(\alpha^{1/p})[x]$.
However, both congruences can and will now be interpreted in the power series ring $\F_p(\alpha^{1/p})[[x]]$.

Because the polynomial $L_{p-1}^{(\alpha)}(x+\alpha-\alpha^{1/p})-\delta$, where $\delta=T(\alpha^{1/p})$,
has no constant term and a nonzero term of degree one,
it has a compositional inverse as a series in $\F_p(\alpha^{1/p})[[x]]$.
Therefore, in $\F_p(\alpha^{1/p})[[x]]$ we can apply the substitution (or change of uniformizing parameter)
\[
X=-L_{p-1}^{(\alpha)}(x+\alpha-\alpha^{1/p})+\delta
\]
(the symbol $X$ being reused now with a different meaning from earlier in the proof).
According to Equation~\eqref{eq:inverses_power_series} the inverse substitution satisfies
\[
x\equiv
-\pounds_{1}^{(\alpha)}(\delta-X)-(\alpha-\alpha^{1/p})
\pmod {X^p}.
\]
Applying this substitution to Equation~\eqref{eq:inverses_derived} we find
\[
-\pounds_{0}^{(\alpha)}(\delta-X)
\cdot
\bigl(-\pounds_{1}^{(\alpha)}(\delta-X)-\alpha\bigr)
\equiv
-\pounds_{1}^{(\alpha)}(\delta-X)
\pmod{X^p},
\]
Because this congruence involves only polynomials it actually takes place in the polynomial ring
$\F_p(\alpha^{1/p})[X]$.
Replacing $X$ with $\delta-X$ we have
\[
\pounds_{0}^{(\alpha)}(X)\bigl(\pounds_{1}^{(\alpha)}(X)+\alpha\bigr)\equiv -\pounds_{1}^{(\alpha)}(X)
\pmod{X^p-T(\alpha)},
\]
which is equivalent to the desired conclusion.
\end{proof}

We are now ready to present a proof of Theorem~\ref{thm:powers}.

\begin{proof}[Proof of Theorem~\ref{thm:powers}]
We will omit the modulus from all congruences in this proof, which will invariably be $X^p-T(\alpha)$.
We proceed by induction on $d$, the case $d=1$ being trivial, hence assume $d>1$.
Because
\[
\sum_{r=0}^{d-1}{d \brack r+1}(k\alpha)^{r}
=(k\alpha+d-1)_d/(k\alpha)
=(k\alpha+d-1)_{d-1},
\]
the desired conclusion can be written as
\[
\pounds_{1}^{(\alpha)}(X)^d/d\equiv
(-1)^{d-1} \sum_{k=1}^{p-1} (k\alpha+d-1)_{d-1}
\,g_k(\alpha)X^k/k^d.
\]
To prove this congruence, write
\[
\pounds_{1}^{(\alpha)}(X)^d/d\equiv \sum_{k=0}^{p-1}c_{k}(\alpha)X^k,
\]
as certainly holds for certain rational expressions $c_{k}(\alpha)\in \F_p(\alpha)$ to be determined.
Applying the differential operator $X\,d/dX$ to both sides of the congruence
we find
\[
\pounds_{0}^{(\alpha)}(X) \cdot \pounds_{1}^{(\alpha)}(X)^{d-1}\equiv
\sum_{k=1}^{p-1}k c_{k}(\alpha)X^k.
\]
Note that this kills the coefficient $c_0(\alpha)$, so we will deal with that separately later.
According to Lemma~\ref{lemma:G_0G_1} the above congruence is equivalent to
\[
-\pounds_{1}^{(\alpha)}(X)^{d-1}- \alpha \pounds_{0}^{(\alpha)}(X) \pounds_{1}^{(\alpha)}(X)^{d-2}
\equiv
\sum_{k=1}^{p-1}k c_{k}(\alpha)X^k.
\]
Now by the inductive hypothesis we have
\[
\pounds_{1}^{(\alpha)}(X)^{d-1}\equiv
(-1)^d(d-1) \sum_{k=1}^{p-1} (k\alpha+d-2)_{d-2}
\,g_k(\alpha)X^k/k^{d-1},
\]
and because
$\pounds_{0}^{(\alpha)}(X) \pounds_{1}^{(\alpha)}(X)^{d-2}$
results from applying the differential operator $X\,d/dX$ to
$\pounds_{1}^{(\alpha)}(X)^{d-1}/(d-1)$
we obtain
\[
\pounds_{0}^{(\alpha)}(X) \pounds_{1}^{(\alpha)}(X)^{d-2}\equiv
(-1)^d\sum_{k=1}^{p-1} (k\alpha+d-2)_{d-2}
\,g_k(\alpha)X^k/k^{d-2}.
\]
In conclusion, we find
\begin{align*}
k\,c_{k}(\alpha)
&=
(-1)^{d-1}(k\alpha+d-2)_{d-2}
(d-1+k\alpha)
\,g_k(\alpha)/k^{d-1}
\\&=
(-1)^{d-1}(k\alpha+d-1)_{d-1}
\,g_k(\alpha)/k^{d-1},
\end{align*}
for $1\leq k \leq p-1$.

In order to complete the proof it remains to show that $c_{0}(\alpha)$ vanishes.
Using the inductive hypothesis it suffices to show that
\[
\pounds_{1}^{(\alpha)}(X)\cdot
\sum_{k=1}^{p-1} (k\alpha+d-2)_{d-2}
\,g_k(\alpha)X^k/k^{d-1}
\]
has no term of degree $p$, as that is the only term which would contribute to $c_{0}(\alpha)$ after reduction modulo $X^p-T(\alpha)$.
In fact, the coefficient of $X^p$ in the above product equals
\[
\sum_{k=1}^{p-1}\frac{g_{p-k}(\alpha)}{p-k}
\cdot
\frac{g_{k}(\alpha)(k\alpha+d-2)_{d-2}}{k^{d-1}}=
g_{p-1}(\alpha)\sum_{k=1}^{p-1}\frac{(k\alpha+d-2)_{d-2}}{k^d},
\]
where we have used Equation~\eqref{eq:sym}.
The latter sum vanishes because $\sum_{k=1}^{p-1}1/k^r$ vanishes in $\F_p$ for $0<r<p-1$,
and $(k\alpha+d-2)_{d-2}$ has degree less than $p-1$ as polynomial in $k$.
\end{proof}

When $d=p-1$, a supplementary case which we mentioned after Theorem~\ref{thm:powers},
the inductive step extends in the above proof providing expressions for the coefficients $c_k(\alpha)$ for $0<k<p$, but the separate final argument for
the vanishing of $c_0(\alpha)$ needs modifications, and yields $c_0(\alpha)=\alpha^{p-1}-1$ instead.

The following proof of Theorem~\ref{thm:pounds_d(x^h)} relies on the special case where $d=1$, which is~\cite[Theorem~8]{AviMat:glog}, and uses
Theorem~\ref{thm:powers} to extend it to higher values of $d$.

\begin{proof}[Proof of Theorem~\ref{thm:pounds_d(x^h)}]
We proceed by induction on $d$, the case $d=1$ being~\cite[Theorem~8]{AviMat:glog}.
Hence assume $1<d<p-1$ and consider the right-hand side of the desired congruence multiplied by $d!$ to avoid introducing denominators.
According to Theorem~\ref{thm:powers},
\[
d!\, h^d \pounds_{d}^{(\alpha)}(X)
\equiv
(-1)^{d-1} (h \pounds_{1}^{(\alpha)}(X))^d-d\sum_{r=1}^{d-1}{d\brack r+1}
(h \alpha)^{r} h^{d-r}\pounds_{d-r}^{(\alpha)}(X)
\]
modulo $X^p-T(\alpha)$.
By induction we have
\[
h^{d-r} \pounds_{d-r}^{(\alpha)}(X)\equiv \pounds_{d-r}^{(h\alpha)}
\left(g_h(\alpha)X^h\right)
\pmod {X^p-T(\alpha)}
\]
for $0<r<d$,
and hence $d!\, h^d \pounds_{d}^{(\alpha)}(X)$ is congruent to
\[
(-1)^{d-1}\left(\pounds_{1}^{(h\alpha)}
\bigl(g_h(\alpha)X^h\bigr) \right)^d- d\sum_{r=1}^{d-1}{d\brack r+1}
(h \alpha)^{r}\pounds_{d-r}^{(h\alpha)}\bigl(g_h(\alpha)X^h\bigr)
\]
modulo $X^p-T(\alpha)$.
According to Theorem~\ref{thm:powers}, with $X$ replaced by $g_h(\alpha)X^h$ and $\alpha$ replaced by $h\alpha$,
the above expression is congruent to the desired
$
d!\,\pounds_{d}^{(h\alpha)}\bigl(g_h(\alpha)X^h\bigr),
$
but modulo $\bigl(g_h(\alpha)X^h\bigr)^p-T(h\alpha)$.
However, this polynomial is a multiple of the desired modulus $X^p-T(\alpha)$
because
$
T(h\alpha)=
g_h(\alpha)^p\,
T(\alpha)^h.
$
This can be seen by setting $X=\alpha-\alpha^{1/p}$ in the congruence
\[
g_h(\alpha)\bigl(L_{p-1}^{(\alpha)}(X)\bigr)^h
\equiv L_{p-1}^{(h\alpha)}(hX)
\pmod{X^p-(\alpha^p-\alpha)},
\]
which is~\cite[Equation~6]{AviMat:glog},
and then taking $p$th powers of both sides.
\end{proof}

We conclude the paper with a proof of Theorem~\ref{thm:GP}, which also uses Lemma~\ref{lemma:G_0G_1}.

\begin{proof}[Proof of Theorem~\ref{thm:GP}]
Expanding the left-hand side of the claimed equation we find
\[
\sum_{r=1}^{p-1} \pounds_{d}^{(r\alpha^p)}\bigl(T(r\alpha)\,X\bigr)=
\sum_{k=1}^{p-1}
\biggl(
\sum_{r=1}^{p-1}
g_k(r\alpha^p)\,
T(r\alpha)^k
\biggr)
X^k/k^d.
\]
As mentioned in the proof of Theorem~\ref{thm:powers} we have
$
T(h\alpha)=
g_h(\alpha^p)\,
T(\alpha)^h
$
for $0<h<p$, whence
$
g_k(r\alpha^p)\,
T(r\alpha)^k
=T(kr\alpha)=
g_r(k\alpha^p)\,
T(k\alpha)^r.
$
Consequently, we have
\[
\sum_{r=1}^{p-1}
g_k(r\alpha^p)\,
T(r\alpha)^k
=
\sum_{r=1}^{p-1} g_r(k\alpha^p)\,
T(k\alpha)^r
=\pounds_{0}^{(k\alpha^p)}\bigl(T(k\alpha)\bigr).
\]
Computing this reduces to computing
$\pounds_{1}^{(\alpha^p)}\bigl(T(\alpha)\bigr)$
by means of Lemma~\ref{lemma:G_0G_1}.
In fact, after taking $p$th powers of both sides the congruence of Lemma~\ref{lemma:G_0G_1} yields
\[
\pounds_{0}^{(\alpha^p)}(X^p) \cdot \pounds_{1}^{(\alpha^p)}(X^p)
\equiv
-\pounds_{1}^{(\alpha^p)}(X^p)
-\alpha \pounds_{0}^{(\alpha^p)}(X^p)
\pmod{X^p-T(\alpha)},
\]
whence
\[
\pounds_{0}^{(\alpha^p)}\bigl(T(\alpha)\bigr) \cdot \pounds_{1}^{(\alpha^p)}\bigl(T(\alpha)\bigr)
=
-\pounds_{1}^{(\alpha^p)}\bigl(T(\alpha)\bigr)
-\alpha \pounds_{0}^{(\alpha^p)}\bigl(T(\alpha)\bigr)
\]
in $\F_p[\alpha]$.
Now taking $p$th powers of both sides of the congruence
$-\pounds_{1}^{(\alpha)}\bigl(L_{p-1}^{(\alpha)}(X)\bigr)\equiv X
\pmod{X^p-(\alpha^p-\alpha)}$
and then replacing $X^p$ with $\alpha^p-\alpha$ we find $\pounds_{1}^{(\alpha^p)}\bigl(T(\alpha)\bigr)= \alpha-\alpha^p$.
Consequently, we find
$\pounds_{0}^{(\alpha^p)}\bigl(T(\alpha)\bigr)=\alpha^{p-1}-1$,
whence
$
\pounds_{0}^{(k\alpha^p)}\bigl(T(k\alpha)\bigr)=\alpha^{p-1}-1
$,
as desired.
\end{proof}

\bibliography{References}

\end{document}